\newtheorem{definition}{Definition}
\newtheorem{theorem}[definition]{Theorem}
\newtheorem{lemma}[definition]{Lemma}
\newtheorem{corollary}[definition]{Corollary}
\newtheorem{remarks}[definition]{Remarks}
\newcommand{\from}{\colon}
\newcommand{\Aut}{\operatorname{Aut}}
\numberwithin{figure}{subsection}
\begin{document}
\title[Coverings of Profinite Graphs]{Coverings of Profinite Graphs}

\author{Amrita Acharyya}
\address{Department of Mathematics and Statistics\\
University of Toledo, Main Campus\\
Toledo, OH 43606-3390}
\email{Amrita.Acharyya@utoledo.edu}

\author{Jon M. Corson}
\address{Department of Mathematics\\
University of Alabama\\
Tuscaloosa, AL 35487-0350}
\email{jcorson@ua.edu}

\author{Bikash Das}
\address{Department of Mathematics\\
University of North Georgia, Gainesville Campus\\
Oakwood, Ga. 30566}
\email{Bikash.Das@ung.edu}
\doublespacing
\begin{abstract}

We define a covering of a profinite graph to be a projective limit of a system of covering maps of finite graphs. With this notion of covering, we develop a covering theory for profinite graphs which is in many ways analogous to the classical theory of coverings of abstract graphs. For example, it makes sense to talk about the universal cover of a profinite graph and we show that it always exists and is unique. We define the profinite fundamental group of a profinite graph and show that a connected cover of a connected profinite graph is the universal cover if and only if its profinite fundamental group is trivial.
\end{abstract}
\maketitle
\section{Introduction}

In this work we develop the idea of coverings for abstract graphs and the topological covering space theory in the category of Profinite Graphs which is in many senses similar to the idea of a Galois Cover, first studied and coined by Pavel Zalesskii~\cite{pZ89}. We define a more general covering of profinite graphs to be a projective limit of a system of covering maps of finite graphs. With this notion of covering, we developed a covering theory for profinite graphs which is in many ways analogous to the classical theory of coverings of abstract graphs and topological covering spaces. We give a necessary and sufficient condition for a continuous map of profinite graphs to lift to a profinite covering graph. Our condition is a projective analogue of the well-known condition in the topological theory of covering spaces. We define the covering transformations in a similar way as in the classical theory and found that the group of covering transformations of a connected covering graph is a profinite group that acts freely and uniformly equicontinuously on the covering graph.

 In light of our definition of a covering of profinite graphs we define a regular covering of profinite graphs to be a projective limit of regular coverings of finite path connected graphs. At this point we show that our regular cover turns to be what Zalesskii has called a Galois cover. We prove that the action of the group of covering transformation of a regular covering graph is continuous, residually free and is simply transitive on each vertex fiber. Conversely, we show that if a profinite group acts continuously and residually freely on a profinite connected graph $\Gamma$, without edge inversions, then the corresponding orbit mapping is a regular covering map of profinite graphs; moreover the profinite group turns out to be the group of covering transformations of the corresponding covering graph. We also give a criterian for a map of profinite graphs to be a regular covering in terms of Good Pairs of compatible cofinite entourages. We show that, any closed subgroup $H$ of $G$, the group of covering transformations of a regular covering graph, gives rise to a profinite covering graph of the corresponding base graph, and this is again a regular covering if $H$ is a normal subgroup of $G$. Moreover, the original regular covering graph is a regular covering arising in that way with $H=G$. 

We define the profinite analogue of fundamental group first in the special case of a finite discrete graph. Then we extended this idea to an arbitrary profinite graph by taking the projective limit of the profinite fundamental groups of its finite discrete quotient graphs. We notice that for any map of profinite graphs there exists a natural induced homomorphism of the profinite analogue of the fundamental groups. 

 We define a universal covering of profinite connected graphs by way of a universal mapping property as in the case of universal coverings of abstract connected graphs and path connected topological spaces. We first notice that if the universal cover of a profinite connected graph exists then it is unique up to isomorphism of profinite covering graphs. We give a construction of the universal profinite covering graph for any connected profinite graph. Then, by examining the graphs arising from this construction, we get a better understanding of the properties of universal profinite covering graphs. In particular, we show that a universal covering of profinite graphs is a regular covering. From our construction we also notice that a connected covering graph is the universal covering of the base graph if and only the profinite analogue of the fundamental group of that covering graph based at any of its vertices is trivial. Applying results of regular coverings and good pairs to the universal covering of a connected profinite graph $\Delta$ leads to a characterization of all profinite covering graphs of $\Delta$ in terms of closed subgroups of the group of covering transformations of the universal profinite covering graph of $\Delta$.


\section{Definition and elementary properties}\label{s:Definition}

For more details about cofinite graphs and profinite graphs see ~\cite{ACD13}.

By a covering of profinite graphs, we will mean a projective limit of an inverse system of locally bijective maps of finite discrete graphs. To make this precise, we first discuss inverse systems of maps of cofinite graphs in general. 

Let $I$ be a directed set. An {\it inverse system of maps of cofinite graphs\/} 
$(f_i\from\Gamma_i\to\Delta_i,\phi_{ij},\psi_{ij})$
indexed by $I$ consists of:
\begin{enumerate}
\item[(i)] for each $i\in I$, a continuous map of cofinite graphs $f_i\from\Gamma_i\to\Delta_i$;
\item[(ii)] for each $j\ge i$, continuous maps of graphs $\phi_{ij}\from\Gamma_j\to\Gamma_i$ and $\psi_{ij}\from\Delta_j\to\Delta_i$ such that the diagram
$$\begin{CD}
\Gamma_j @>\phi_{ij}>> \Gamma_i \\
@Vf_jVV @VVf_iV \\
\Delta_j @>\psi_{ij}>> \Delta_i 
\end{CD}$$ 
commutes;
\item[(iii)] for all $k\ge j\ge i$, we require that $\phi_{ik}=\phi_{ij}\phi_{jk}$ and $\psi_{ik}=\psi_{ij}\psi_{jk}$, furthermore  
for all $i \in I, \phi_{ii} = id_{\Gamma_i}, \psi_{ii} = id_{\Delta_i}$.
\end{enumerate}

In other words, an inverse system of maps of cofinite graphs is an inverse system in the category whose objects are continuous maps of cofinite graphs and whose morphisms are pairs of continuous maps of graphs that form commutative diagrams. 

An {\it inverse limit\/} of the system $(f_i\from\Gamma_i\to\Delta_i,\phi_{ij},\psi_{ij})$ consists of: 
\begin{enumerate}
\item[(i)] a continuous map of cofinite graphs $f\from\Gamma\to\Delta$ and two families of continuous maps of cofinite graphs $(\phi_i\from\Gamma\to\Gamma_i)_{i\in I}$ and $(\psi_i\from\Delta\to\Delta_i)_{i\in I}$ such that: for each $i\in I$, the diagram
$$\begin{CD}
\Gamma @>\phi_i>> \Gamma_i \\
@VfVV @VVf_iV \\
\Delta @>\psi_i>> \Delta_i 
\end{CD}$$ 
commutes, and for each $j\ge i$, $\phi_i=\phi_{ij}\phi_j$ and $\psi_i=\psi_{ij}\psi_j$.
\item[(ii)] If for another continuous map of cofinite graphs $f^{\prime}\from\Gamma^{\prime}\to\Delta^{\prime}$ and two families of continuous maps of cofinite graphs $(\phi_i^{\prime}\from\Gamma^{\prime}\to\Gamma_i)_{i\in I}$ and $(\psi_i^{\prime}\from\Delta^{\prime}\to\Delta_i)_{i\in I}$ such that, for each $i\in I$, the diagram
$$\begin{CD}
\Gamma^{\prime} @>\phi_i^{\prime}>> \Gamma_i \\
@Vf^{\prime}VV @VVf_iV \\
\Delta^{\prime} @>\psi_i^{\prime}>> \Delta_i 
\end{CD}$$ 
commutes, and for each $j\ge i$, $\phi_i^{\prime}=\phi_{ij}\phi_j^{\prime}$ and $\psi_i^{\prime}=\psi_{ij}\psi_j^{\prime}$, then there exist unique continuous maps of graphs 

$$
h\from\Gamma^{\prime} \to \Gamma, g\from\Delta^{\prime}\to \Delta
$$ 

such that the following diagram commutes.

$$\begindc{\commdiag}[20]

\obj(0,0)[1]{$\Delta$}
\obj(2,2)[2]{$\Delta^{\prime}$}
\obj(4,0)[3]{$\Delta_i$}
\obj(0,6)[4]{$\Gamma$}
\obj(2,4)[5]{$\Gamma^{\prime}$}
\obj(4,6)[6]{$\Gamma_i$}
\mor{1}{3}{$\psi_i$}
\mor{2}{1}{$g$}
\mor{2}{3}{$\psi_{i}^{\prime}$}
\mor{4}{1}{$f$}
\mor{5}{2}{$f^{\prime}$}
\mor{6}{3}{$f_i$}
\mor{4}{6}{$\phi_i$}
\mor{5}{4}{$h$}
\mor{5}{6}{$\phi_i^{\prime}$}
\enddc$$

\end{enumerate}

If the inverse limit exists, then it is unique up to a natural isomorphism of graphs. 

To see that such inverse limits exists, we can construct the inverse limit of an inverse system 
$(f_i\from\Gamma_i\to\Delta_i,\phi_{ij},\psi_{ij})$ of maps of cofinite graphs as follows:

Let $\Gamma=\varprojlim(\Gamma_i,\phi_{ij})$ and
$\Delta=\varprojlim(\Delta_i,\psi_{ij})$; denote the canonical projections by  $\phi_i\from\Gamma\to\Gamma_i$ and $\psi_i\from\Delta\to\Delta_i$. 
Then the compositions $(f_i\circ\phi_i\from\Gamma\to\Delta_i)_{i\in I}$ form a compatible family of continuous maps of graphs and thus induce a continuous map of graphs $f\from\Gamma\to\Delta$. 
It can be shown that these maps satisfy the requirements of the inverse limit. 

Since the inverse limit is unique up to isomorphism of cofinite graphs, we write 
$$
f=\varprojlim(f_i\from\Gamma_i\to\Delta_i,\phi_{ij},\psi_{ij})
$$ 
or 
$f=\varprojlim f_i$ when the system of maps is understood. Our definition of a covering can now be stated precisely as follows.

\begin{definition}[Covering]\rm \label{d:Covering}
A map of profinite graphs $f\from\Gamma\to\Delta$ is a {\it covering\/} if it is the inverse limit of an inverse system of maps of graphs 
$(f_i\from\Gamma_i\to\Delta_i,\phi_{ij},\psi_{ij})$, where each $f_i\from\Gamma_i\to\Delta_i$ is a locally bijective map of finite discrete graphs.
In this situation, we call the pair $(\Gamma,f)$ a {\it profinite covering graph\/} of $\Delta$.
\end{definition}

For the rest of this subsection, let $I$ be a directed set and let $(f_i\from\Gamma_i\to\Delta_i,\phi_{ij},\psi_{ij})$ be an  inverse system of locally bijective maps of finite discrete graphs, indexed by $I$. Then the inverse limit $f\from\Gamma\to\Delta$ of this system is a covering map of profinite graphs. Denote the canonical projections by $\phi_i\from\Gamma\to\Gamma_i$ and $\psi_i\from\Delta\to\Delta_i$.

\begin{remarks}
\begin{enumerate}
\item  $\varprojlim\Gamma_i = \Gamma = \varprojlim\phi_i(\Gamma), \varprojlim\Delta_i = \Delta = \varprojlim\psi_i(\Delta)$
\item $\{R_i\mid i\in I\}$, $\{S_i\mid i\in I\}$ form fundamental systems of compatible cofinite entourages for $\Gamma$, $\Delta$ respectively where $R_i=\phi_i^{-1}\phi_i =$ kernel of $\phi_{i} = \{(x,y)\in \Gamma \mid \phi_i(x) = \phi_i(y)\} , S_i=\psi_i^{-1}\psi_i =$ kernel of $\psi_{i} = \{(x,y)\in \Delta \mid \psi_i(x) = \psi_i(y)\}$, for each $i\in I$. 
\item We may assume that the canonical maps $\psi_i\from\Delta\to\Delta_i$ are surjective, and thus $\Delta_i = \psi_i(\Delta)=\Delta/S_i$ and $\Gamma_i = f_{i}^{-1}[(\psi_{i}(\Delta)] = f_{i}^{-1}(\Delta_i)$. 
\item If both $\Delta$ and $\Gamma$ are connected, then we may assume that each $\Delta_i$ and each $\Gamma_i$ is path connected.
\end{enumerate}
\end{remarks}
\begin{proof}
We will now provide brief proofs of the above remarks. We frequently use a few facts about inverse limits of profinite graphs listed as Mathematical Preliminaries.
\begin{enumerate}
\item Since $\Gamma$ is closed, in itself $\varprojlim\phi_i(\Gamma) = \Gamma$ and similarly, $\Delta = \varprojlim\psi_i(\Delta)$.
\item $R_i=\phi_i^{-1}\phi_i$, $S_i=\psi_i^{-1}\psi_i$, are compatible cofinite entourages over $\Gamma$ and $\Delta$ respectively, for each $i\in I$. Without loss of generality one can take $\Gamma/R_i = \phi_i(\Gamma)$. Thus we can successfully claim that $\varprojlim\Gamma/R_i = \Gamma$ and $\varprojlim\Delta/S_i = \Delta$. 

Now let $R$ be any compatible cofinite entourage over $\Gamma$ and let $\eta_R\from\Gamma\to\Gamma/R$ be the natural quotient map. There exists some $R_i$ such that the following diagram commutes for some continuous map of graphs $\phi_{RR_i}\from\Gamma/R_i\to\Gamma/R$.

$$\begindc{\commdiag}[30] 
\obj(0,0)[1]{$\Gamma$}
\obj(0,-2)[2]{$\Gamma/R_i$}
\obj(2,-2)[3]{$\Gamma/R$}
\mor{1}{2}{$\phi_i$}
\mor{1}{3}{$\eta_R$}
\mor{2}{3}{$\phi_{RR_i}$}
\enddc$$

Hence $R_i\subseteq R$. Thus $\{R_i\mid i\in I\}$ forms a fundamental system of compatible cofinite entourages over $\Gamma$. Similarly $\{S_i\mid i\in I\}$ forms a fundamental systems of compatible cofinite entourages over $\Delta$.
\item Let us denote $f_{i}|_{f_{i}^{-1}[\psi_{i}(\Delta)]}$ by $g_i$, $\psi_{i}(\Delta)$ by $\Delta_{i}^{\prime}$ and $f_{i}^{-1}[(\psi_{i}(\Delta)]$ by $\Gamma_i^{\prime}$. Thus  \hbox{$f_{i}|_{f_{i}^{-1}[(\psi_{i}(\Delta)]}\from f_{i}^{-1}[(\psi_{i}(\Delta)]\to \psi_{i}(\Delta)$}  will now be viewed as \hbox{$g_i\from\Gamma_i^{\prime}\to\Delta_i^{\prime}$}.
 It turns out that each $g_{i}$ is locally bijective and $(\Gamma_i^{\prime}, \phi_{ij})_{i,j \in I, j\geq i}$ forms an inverse system of finite discrete graphs (by a little abuse of notation, we assume $\phi_{ij} = \phi_{ij}|_{\Gamma_j^{\prime}}$), where $\phi_{i}(\Gamma) \subseteq \Gamma_i^{\prime} \subseteq \Gamma_{i},$ for all $i$ in $I$.
 Thus $\Gamma = \varprojlim \phi_{i}(\Gamma) = \varprojlim \Gamma_i^{\prime} = \varprojlim \Gamma_{i}$. So without loss of generality, we are able to replace $\Delta_{i}$ by  $\Delta_i^{\prime} = \psi_{i}(\Delta) = \Delta/S_i$ and $\Gamma_{i}$ by $\Gamma_i^{\prime} = f_{i}^{-1}((\psi_{i}(\Delta))$.
Hence, if $\Delta$ is connected, then each $\Delta/S_i$ is path connected and we may assume that each $\Delta_i$ is path connected.
\item Let us choose a vertex $v = (v_{i})_{i \in I}$ in $\Gamma = \varprojlim\Gamma_i$ where, without loss of generality, we may assume $(f_{i}^{-1}(\Delta_i)) = \Gamma_i$ and $\psi_{i}(\Delta) = \Delta_i$. Since $\Delta$ is cofinitely connected, each $\Delta_i$ is path connected. Let us choose $\Gamma_{i}^{\prime}$ as the path component of $\Gamma_i$ containing $v_{i}$.  It follows that $\phi_i(\Gamma)\subseteq \Gamma_i^{\prime}\subseteq \Gamma_{i}$. 
So, $\Gamma = \varprojlim \phi_{i}(\Gamma) = \varprojlim (\Gamma_{i}^{\prime}) = \varprojlim \Gamma_{i} = \Gamma$. 

Thus without loss of generality we may assume that each $\Gamma_i = \Gamma_i^{\prime}$ is path connected.
\end{enumerate}
\end{proof} 

In the following lemma, we list some elementary properties of coverings. 

\begin{lemma}\label{elem prop}
Let $f\from\Gamma\to\Delta$ be a covering of profinite graphs. Then the following properties hold:
\begin{enumerate}
\item[(a)] $f$ is a locally bijective map of graphs;
\item[(b)] if $\Delta$ is connected, then $f$ is surjective and thus $f$ is a quotient map; 
\item[(c)] For each edge $e\in\Delta$, we define a map $a\mapsto a.e$ from $f^{-1}(s(e))$ to $f^{-1}(t(e))$ by requiring that $s(\tilde{e}).e = t(\tilde{e})$ for all $\tilde{e}$ in $f^{-1}(e)$.
Then $a\mapsto a.e$ is an isomorphism of uniform spaces.
\end{enumerate}
\end{lemma}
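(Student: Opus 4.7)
The plan is to exploit the inverse-limit structure throughout, passing properties from the finite factors $f_i$ to $f$, and at the end reading the uniform structure off of the compatible cofinite entourages $R_i$ supplied by the Remarks.

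For part (a), I would represent vertices and edges of $\Gamma$ and $\Delta$ as compatible families $v=(v_i)_{i\in I}$, $e=(e_i)_{i\in I}$. Given $e\in\Delta$ with $s(e)=f(v)$, local bijectivity of each $f_i$ produces a unique lift $\tilde e_i\in\Gamma_i$ with $s(\tilde e_i)=v_i$ and $f_i(\tilde e_i)=e_i$. I would then verify the compatibility $\phi_{ij}(\tilde e_j)=\tilde e_i$ by applying the commutativity relation $f_i\phi_{ij}=\psi_{ij}f_j$ and the uniqueness half of local bijectivity at level $i$. Assembling gives the unique lift $\tilde e=(\tilde e_i)_i\in\Gamma$ with source $v$, which is exactly the local bijectivity of $f$.

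For part (b), using Remark (3) I may assume $\Delta_i=\Delta/S_i$, which is path connected since $\Delta$ is connected. Since $f_i$ is locally bijective, $f_i(\Gamma_i)$ is a union of connected components of $\Delta_i$, hence either empty or all of $\Delta_i$; assuming $\Gamma$ nonempty forces it to equal $\Delta_i$. A standard compactness argument then gives $f=\varprojlim f_i$ surjective, because for any $d=(d_i)_i\in\Delta$ the family $(f_i^{-1}(d_i))_{i\in I}$ is a filtered inverse system of nonempty finite sets (using $f_i\phi_{ij}=\psi_{ij}f_j$ to check compatibility). Finally, since $\Gamma$ is compact and $\Delta$ Hausdorff, $f$ is closed, so the surjection is a quotient map.

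For part (c), I would first use (a) to see that each $a\in f^{-1}(s(e))$ is the source of a unique lift $\tilde e_a$ of $e$, making $a.e:=t(\tilde e_a)$ well-defined, and the symmetric argument at the target shows it is a bijection onto $f^{-1}(t(e))$. For the uniform isomorphism claim, I identify $f^{-1}(s(e))=\varprojlim f_i^{-1}(s(e_i))$ and $f^{-1}(t(e))=\varprojlim f_i^{-1}(t(e_i))$, with subspace uniformities generated by the restrictions of the $R_i$ from Remark (2). The map $a\mapsto a.e$ is then the inverse limit of the finite bijections $a_i\mapsto a_i.e_i$; compatibility with $\phi_{ij}$ again follows from $f_i\phi_{ij}=\psi_{ij}f_j$ and uniqueness of lifts. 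An inverse limit of bijections between finite discrete spaces is automatically a uniform isomorphism, which finishes (c). The main obstacle I expect is keeping the compatibility bookkeeping straight---specifically $\phi_{ij}(\tilde e_j)=\tilde e_i$ in (a) and $\phi_{ij}(a_j.e_j)=\phi_{ij}(a_j).e_i$ in (c)---since both rest on the interplay between the inverse-system commutativity and the uniqueness clause of local bijectivity; once this is verified, the remaining steps reduce to routine inverse-limit manipulations.
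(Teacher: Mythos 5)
Your proposal is correct and follows essentially the same route as the paper: levelwise lifting plus the uniqueness clause of local bijectivity for (a), path connectedness of the $\Delta_i$ together with the nonemptiness of an inverse limit of nonempty compact fibers and the compact-to-Hausdorff argument for (b), and for (c) your compatibility check $\phi_{ij}(a_j.e_j)=\phi_{ij}(a_j).e_i$ is exactly the paper's verification that $(a_1,a_2)\in R_i$ implies $(a_1.e,a_2.e)\in R_i$, phrased as an inverse limit of finite bijections rather than directly in terms of the entourages $R_i$. Your write-up is in fact more explicit than the paper's (which dispatches (a) in one sentence), but there is no substantive difference in method.
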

\begin{proof}
We have $f = \varprojlim(f_{i}:\Gamma_{i}\rightarrow\Delta_{i},\phi_{ij}, \psi_{ij})$, where each $f_i$ is locally bijective.
\begin{enumerate}
\item[(a)] Since for each $i\in I,  f_i$ is  locally bijective it follows that $f$ is also so.
\item[(b)] By remark~(4) each $\Delta_i$ is path connected. 
Since $(\Gamma_i,f_i)$ is a (nonempty) covering graph of the path connected graph $\Delta_i$, it follows that each $f_i$ is surjective. 
Let $y\in\Delta$ be given and write $y_i=\psi_i(y)$ for $i\in I$.
Then $X_i=f_i^{-1}(y_i)$ is nonempty and compact for each $i\in I$. 
Hence $X=\varprojlim X_i$ is nonempty.
For any $x\in X, f(x)=y$.

Moreover since $f$ is a continuous map from a compact space $\Gamma$ to a Hausdorff space $\Delta$ it is a closed continuous surjection and hence a quotient map. 
\item[(c)] Let $\tau\from f^{-1}(s(e))\to f^{-1}(t(e))$ denote the mapping taking $a\mapsto a.e$ where $a.e = t(\tilde{e})$ whenever $s(\tilde e) = a$, for all $\tilde{e}$ in $f^{-1}(e)$.  $\tau$ is a well defined bijection with inverse $\eta\from f^{-1}(t(e))\to f^{-1}(s(e))$, the mapping taking $b\mapsto b.\overline e$. 

We use Remark~2 to prove this. Consider $R_{i}$ for some $i\in I$. Let, $a_1,a_2 \in f^{-1}(s(e))$ and $(a_1,a_2) \in R_i$. Then, $\phi_{i}(a_1).\psi_{i}(e) = \phi_{i}(a_2).\psi_{i}(e)$ where we consider the map $\tau_i \from f_{i}^{-1}(s(\psi_{i}(e)))\to f^{-1}(t(\psi_{i}(e)))$ in the corresponding finite graph level behaving exactly like $\tau$. Now if $\tilde{e_1}, \tilde{e_2} \in f^{-1}(e), a_1,a_2 \in f^{-1}(s(e))$, then $\phi_{i}(a_1).\psi_{i}(e) = t\phi_{i}(\tilde{e_1}) = \phi_{i}(t(\tilde{e_1})) =  \phi_{i}(a_{1}.e)$ and $\phi_{i}(a_2).\psi_{i}(e) = t\phi_{i}(\tilde{e_2}) = \phi_{i}(t(\tilde{e_2})) =  \phi_{i}(a_{2}.e)$. Thus it follows that $(a_{1}.e, a_{2}.e) \in R_i.$  So, $\tau$ is uniformly continuous. Hence the result follows. 
\end{enumerate}
\end{proof}


\section{General lifting criterion} \label{s:General lifting}

In this subsection we give a necessary and sufficient condition for a continuous map of profinite graphs to lift to a profinite covering graph. Our condition is a projective analogue of the well-known condition in the topological theory of covering spaces. As in that classical theory, we first observe that the usual uniqueness of lifts result holds for coverings of profinite groups.

Let $f\from\Gamma\to\Delta$ be a covering of profinite graphs and fix an inverse system $(f_i\from\Gamma_i\to\Delta_i,\phi_{ij},\psi_{ij})$ of locally bijective maps of finite discrete graphs, indexed by $I$, such that 
$f=\varprojlim f_i$. Denote the canonical projections by $\phi_i\from\Gamma\to\Gamma_i$ and $\psi_i\from\Delta\to\Delta_i$ and assume, as we may, that each $\psi_i$ is surjective and identify $\Delta_i$ with $\Delta/S_i$, where $S_i=\psi_i^{-1}\psi_i$, for each $i\in I$.

\begin{lemma} \label{l:Uniqueness of lifts}
Let $f\from\Gamma\to\Delta$ be a covering of profinite graphs and let $\Sigma$ be a connected profinite graph. If $h_1,h_2\from\Sigma\to\Gamma$ are continuous maps of graphs such that $fh_1=fh_2$, and if $h_1(c)=h_2(c)$ for some $c\in\Sigma$, then $h_1=h_2$.
\end{lemma}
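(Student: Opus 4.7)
The plan is to reduce to the classical uniqueness of lifts for covering maps between finite path-connected graphs, and then pass to the inverse limit. Fix $i \in I$ and consider the compositions $\phi_i h_1, \phi_i h_2 \from \Sigma \to \Gamma_i$. Since $f_i \phi_i = \psi_i f$, the hypothesis $f h_1 = f h_2$ yields $f_i (\phi_i h_1) = f_i (\phi_i h_2)$, and clearly $(\phi_i h_1)(c) = (\phi_i h_2)(c)$. It thus suffices to prove $\phi_i h_1 = \phi_i h_2$ for every $i$, since then $h_1 = h_2$ because the family $(\phi_i)_{i \in I}$ separates points of $\Gamma = \varprojlim \Gamma_i$.

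Next I would arrange to view $\phi_i h_1$ and $\phi_i h_2$ as maps out of a path-connected \emph{finite} discrete graph. Since each $\phi_i h_k$ is continuous and $\Gamma_i$ is finite discrete, there is a compatible cofinite entourage $T_k^{(i)}$ of $\Sigma$ through which $\phi_i h_k$ factors. Using that $\Sigma$ is connected, and hence (by the cofinite graph theory of~\cite{ACD13}) admits a cofinal system of path-connected finite discrete quotients, I can choose a compatible cofinite entourage $T_i \subseteq T_1^{(i)} \cap T_2^{(i)}$ with $\Sigma / T_i$ path connected. Let $\sigma_i \from \Sigma \to \Sigma/T_i$ denote the quotient map, and let $\bar h_k^{(i)} \from \Sigma/T_i \to \Gamma_i$ be the induced maps, so $\phi_i h_k = \bar h_k^{(i)} \sigma_i$ for $k=1,2$.

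At this point the problem has been reduced to the classical uniqueness of lifts for covering maps of finite graphs: $f_i \from \Gamma_i \to \Delta_i$ is locally bijective, $\Sigma/T_i$ is path connected, $f_i \bar h_1^{(i)} = f_i \bar h_2^{(i)}$, and $\bar h_1^{(i)}(\sigma_i(c)) = \bar h_2^{(i)}(\sigma_i(c))$. The standard combinatorial argument then applies: whenever the two lifts agree at a vertex $v$, local bijectivity of $f_i$ on the star of $v$ forces them to agree on every edge incident to $v$, and therefore at the opposite endpoints; path-connectedness of $\Sigma/T_i$ propagates this agreement everywhere, so $\bar h_1^{(i)} = \bar h_2^{(i)}$ and consequently $\phi_i h_1 = \phi_i h_2$. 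Taking the inverse limit over $i$ gives $h_1 = h_2$. The only delicate step — and really the main obstacle — is the selection of the intermediate quotient $\Sigma/T_i$ simultaneously satisfying $T_i \subseteq T_1^{(i)} \cap T_2^{(i)}$ and $\Sigma/T_i$ path connected; this rests on the cofinality of path-connected finite quotients in a connected profinite graph, which is a basic fact from the cofinite graph framework referenced in the excerpt.
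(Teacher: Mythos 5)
Your proposal is correct and follows essentially the same route as the paper's proof: the paper takes $T_i=(h_1\times h_1)^{-1}(R_i)\cap(h_2\times h_2)^{-1}(R_i)$ (your $T_1^{(i)}\cap T_2^{(i)}$), passes to the induced maps on the finite quotient $\Sigma/T_i$, and invokes the classical uniqueness of lifts for locally injective maps of finite path-connected graphs before intersecting over all $i$. The step you flag as delicate is automatic here, since in the cofinite-graph framework every finite discrete quotient of a connected profinite graph by a compatible cofinite entourage is already path connected, so no further refinement of $T_i$ is needed.
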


\begin{proof}
Let us first notice a general fact about maps of profinite graphs and compatible cofinite entourages. 

If $f\from \Gamma\to\Delta$ is a map of profinite graphs and $R, S$ be two compatible cofinite entourages over $\Gamma, \Delta$ respectively such that $(f\times f)[R]\subseteq S$. Then $f_{SR}\from \Gamma/R\to\Delta/S$ defined via $f_{SR}(R[x]) = S[f(x)],$ for all $x\in\Gamma$ is a well defined map of graphs. 

Without loss of generality we may assume that $c \in V(\Sigma)$. Let  $h_1(c)=h_2(c)$. Let us take $x \in \Sigma$. Consider $h_1(x)$ and $h_2(x)$. Let $ i \in I$. Set

$$
T_{i} = (h_{1}\times h_{1})^{-1}(R_{i})\bigcap (h_{2}\times h_{2})^{-1}(R_{i})
$$
Then, $\Sigma /T_{i}$ is path connected. Let $g =fh_1 =fh_2$. Then, $(g\times g)[T_i] = (f\times f) ((h_{1}\times h_{1})[T_i]) \subseteq (f\times f)[R_i] \subseteq S_i$. So, now consider the natural maps of graphs $g_{S_{i}T_{i}} \from \Sigma/T_{i} \to \Delta/S_{i}$ and $f_{S_{i}R_{i}} \from \Gamma/R_{i} \to \Delta/S_{i}$. Also, we observe that $(h_{1}\times h_{1})[T_{i}] \subseteq R_i, (h_{2}\times h_{2})[T_{i}] \subseteq R_i $. So, again consider the natural maps of graphs  $ (h_{1})_{R_{i}T_{i}} \from \Sigma/T_i \to \Gamma/R_i$, and $(h_{2})_{R_{i}T_{i}} \from \Sigma/T_i \to \Gamma/R_i$. We note that $f_{S_{i}R_{i}}(h_{1})_{R_{i}T_{i}} = f_{S_{i}R_{i}}(h_{2})_{R_{i}T_{i}}$ and $(h_{1})_{R_{i}T_{i}}(T_{i}[c]) = (h_{2})_{R_{i}T_{i}}(T_{i}[c])$. Now we have $\Gamma/R_i$ is embedded inside $\Gamma_{i}$ as a subgraph, and so is $\Delta/S_i$ inside $\Delta_{i}$, where $f_i\from \Gamma_{i} \to \Delta_{i}$ is a local bijection of finite graphs and the restriction of $f_i$ over $\Gamma/R_{i}$ is $f_{S_{i}R_{i}}$. Hence by the results of abstract graph theory it follows that $(h_{1})_{R_{i}T_{i}} = (h_{2})_{R_{i}T_{i}}$ since $\Sigma/T_{i}$ is path connected and $f_{S_{i}R_{i}}$ is locally injective. Thus we have for any $x \in \Sigma, (h_{1}(x), h_{2}(x)) \in R_{i}$, for all $i \in I$. Thus , $h_{1}(x) = h_{2}(x)$, for all $x \in \Sigma$.  
\end{proof}

\begin{theorem}\label{Lifting Criterion}
Suppose $\Sigma$ is a connected profinite graph, $g\from\Sigma\to\Delta$ is a continuous map of graphs, and let $a$, $b$, $c$ be vertices  of $\Gamma$, $\Delta$, $\Sigma$ such that $f(a)=b=g(c)$. 
Then there exists a unique continuous map of graphs $h\from\Sigma\to\Gamma$ such that $h(c)=a$ and $fh=g$ if and only if for each $i\in I$, there exists a compatible cofinite entourage $T_i$ of $\Sigma$ such that 
$(g\times g)[T_i]\subseteq S_i$ and $g_{S_iT_i}\,\pi_1(\Sigma/T_i,T_i[c])\subseteq f_i\,\pi_1(\Gamma_i,a_i)$, where $g_{S_iT_i}\from (\Sigma/T_i,T_i[c]) \to (\Delta_i,b_i)$ is the natural map, with $\Delta_i = \Delta/S_i, b_i = \psi_i(b)$.

$$\begindc{\commdiag}[30]
\obj(0,0)[Sigma]{$\Sigma,c$}
\obj(2,0)[Delta]{$\Delta,b$}
\obj(2,2)[Gamma]{$\Gamma,a$}
\mor{Sigma}{Delta}{$g$}
\mor{Gamma}{Delta}{$f$}
\mor{Sigma}{Gamma}{$h$}[\atleft,\dashArrow]
\enddc$$
\end{theorem}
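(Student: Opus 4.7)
The plan is to reduce to the classical lifting criterion for finite covering graphs, apply it level-by-level to the inverse system defining $f$, and then assemble the resulting lifts into a continuous map into $\Gamma=\varprojlim\Gamma_i$. Uniqueness of $h$ is immediate from Lemma~\ref{l:Uniqueness of lifts} once existence is settled, so the real work is the biconditional for existence.

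For necessity, suppose a continuous graph map $h\from\Sigma\to\Gamma$ with $h(c)=a$ and $fh=g$ exists. For each $i\in I$ I would set $T_i=(h\times h)^{-1}(R_i)$; this is a compatible cofinite entourage of $\Sigma$ since $R_i$ is one on $\Gamma$ and $h$ is continuous. From $\psi_if=f_i\phi_i$ one checks directly that $(g\times g)[T_i]\subseteq S_i$. Identifying $\Gamma/R_i$ with $\phi_i(\Gamma)\subseteq\Gamma_i$ (Remark~2), the map $h$ descends to a graph map $h_{R_iT_i}\from\Sigma/T_i\to\Gamma_i$ satisfying $f_ih_{R_iT_i}=g_{S_iT_i}$ and $h_{R_iT_i}(T_i[c])=a_i$. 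Applying $\pi_1$ gives $g_{S_iT_i}\pi_1(\Sigma/T_i,T_i[c])\subseteq f_i\pi_1(\Gamma_i,a_i)$, as required.

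For sufficiency, for each $i\in I$ let $\mathcal{U}_i$ denote the collection of compatible cofinite entourages $T$ of $\Sigma$ satisfying both $(g\times g)[T]\subseteq S_i$ and $g_{S_iT}\pi_1(\Sigma/T,T[c])\subseteq f_i\pi_1(\Gamma_i,a_i)$; the hypothesis says $\mathcal{U}_i\neq\emptyset$, and both conditions are preserved when $T$ is replaced by any finer $T'\subseteq T$, so each $\mathcal{U}_i$ is closed downward. For any $(T,i)$ with $T\in\mathcal{U}_i$, the graph $\Sigma/T$ is path connected (since $\Sigma$ is connected), $f_i\from\Gamma_i\to\Delta_i$ is a covering of finite graphs, and $g_{S_iT}(T[c])=b_i=f_i(a_i)$; so the classical lifting criterion for coverings of abstract graphs produces a unique graph map $h_{T,i}\from(\Sigma/T,T[c])\to(\Gamma_i,a_i)$ with $f_ih_{T,i}=g_{S_iT}$. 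Two coherence statements, both obtained from uniqueness of lifts in finite graphs, drive the rest of the construction: (a) if $T'\subseteq T$ with $T,T'\in\mathcal{U}_i$ and $\pi\from\Sigma/T'\to\Sigma/T$ the natural projection, then $h_{T,i}\pi=h_{T',i}$; (b) if $j\ge i$ and $T\in\mathcal{U}_i\cap\mathcal{U}_j$, then $\phi_{ij}h_{T,j}=h_{T,i}$, because both sides lift $g_{S_iT}$ through $f_i$ and send $T[c]$ to $a_i=\phi_{ij}(a_j)$. Given any $x\in\Sigma$, I would then define $h(x)$ by specifying its $i$-th coordinate in $\varprojlim\Gamma_i$ as $h_{T,i}(T[x])$ for any $T\in\mathcal{U}_i$; coherence (a) makes this independent of $T$, and coherence (b) shows the resulting tuple lies in $\Gamma$. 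The identities $h(c)=a$ and $fh=g$ are immediate from the construction, and continuity follows because each component map $\phi_ih=h_{T,i}\circ\eta_T$ is continuous.

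The main obstacle I anticipate is bookkeeping rather than mathematics: the entourages $T_i$ supplied by the hypothesis are indexed by $I$ but need not form an inverse system of subsets of $\Sigma\times\Sigma$, whereas the lift $h$ requires coherent choices as $i$ varies. The remedy is the closure-under-refinement observation above, together with the fact that compatible cofinite entourages on $\Sigma$ are closed under finite intersection, which lets us choose for any finite collection of indices a common $T$ lying in all the corresponding $\mathcal{U}_i$'s; beyond this the compatibility reduces to uniqueness of lifts between finite graphs.
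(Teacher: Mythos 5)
Your proposal is correct and follows essentially the same route as the paper: both directions reduce to the classical lifting criterion for finite covering graphs, with the lift assembled coordinatewise in $\varprojlim\Gamma_i$ and all coherence (independence of the choice of $T$, compatibility with the bonding maps $\phi_{ij}$) supplied by uniqueness of lifts. Your treatment of the bookkeeping via the downward-closed families $\mathcal{U}_i$ is in fact slightly more explicit than the paper's, and your choice $T_i=(h\times h)^{-1}(R_i)$ in the necessity direction is a harmless variant of the paper's $(g\times g)^{-1}(S_i)\cap(h\times h)^{-1}(R_i)$.
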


\begin{proof}
Let us first assume that for each $i\in I$, there exists a compatible cofinite entourage $T_i$ of $\Sigma$ such that 
$(g\times g)[T_i]\subseteq S_i$ and $g_{S_iT_i}\,\pi_1(\Sigma/T_i,T_i[c])\subseteq f_i\,\pi_1(\Gamma_i,a_i)$, where, by abuse of notation,  
$$
g_{S_iT_i}\from \,\pi_1(\Sigma/T_i,T_i[c]) \to \,\pi_1(\Delta_i,b_i)
$$ 
is the induced map of fundamental groups obtained from the map of abstract graphs $g_{S_iT_i}\from(\Sigma/T_i,T_i[c]) \to (\Delta_i,b_i)$ with $g_{S_iT_i}(T_{i}[x]) = S_{i}[g(x)] = \psi_{i}(g(x)).$  Let $a = (a_{i})_{i \in I} , b = (b_{i})_{i \in I}$. Then $f_{i}(a_{i}) = b_{i},$ for all $i\in I $. By the general lifting criterion of finite graphs there exists a unique lift $h^{i}_{T_{i}}\from (\Sigma/T_i,T_i[c]) \to (\Gamma_i,a_i)$ such that $f_{i}h^{i}_{T_{i}} = g_{S{i}T_{i}}$. Define $h^{i}\from \Sigma \to \Gamma_{i}$ by  $h^{i} = h^{i}_{T_{i}}q_{T_{i}}$ where $q_{T_{i}}\from\Sigma \to \Sigma/T_i$ is the natural quotient map. Then $h^{i}$ is a continuous map of graphs as $h^{i}_{T_{i}}$ is a continuous map of finite discrete graphs and $q_{T_{i}}$ is a quotient map of graphs  and hence both of them are continuous. 
$$\begindc{\commdiag}[30]
\obj(0,0)[1]{$(\Sigma, c)$}
\obj(6,0)[2]{$(\Delta, b)$}
\obj(10,0)[3]{$(\Delta/S_i, b_i)$}
\obj(6,4)[4]{$(\Gamma, a)$}
\obj(10,4)[5]{$(\Gamma_i, a_i)$}
\obj(0,-2)[6]{$(\Sigma/T_i, T_i[c])$}
\mor{1}{6}{$q_{T_i}$}
\mor{1}{4}{$h$}
\mor{4}{5}{$\phi_i$}
\mor{4}{2}{$f$}
\mor{2}{3}{$\psi_i$}
\mor{5}{3}{$f_i$}
\mor{6}{5}{$h^{i}_{T_{i}}$}
\mor{1}{5}{$h^{i}$}
\mor{6}{3}{$g_{S{i}T_{i}}$}
\enddc$$
 $h^{i}$ is independent of the choice of $T_{i}$. This follows, since $\Sigma$ is a profinitely connected graph and $f_{i}\from(\Gamma_{i}, a_{i})\to (\Delta_{i},b_{i})$ is a covering map of profnite graphs we can apply Lemma~\ref{l:Uniqueness of lifts},. So, now we define $h\from\Sigma \to \Gamma$ as $h(x) = (h^{i}(x))_{i \in I}$. Let us check that $h(x) \in \Gamma$. So, we have to show that for any $x \in \Sigma, i\leq j \in I, \phi_{ij}h^{j}(x) = h^{i}(x)$. It follows by Lemma~\ref{l:Uniqueness of lifts}, since 
$\phi_{ij}h^{j}(c) = h^{i}(c)$ and $f_{i}\phi_{ij}h^{j} = f_{i}h^{i}$.
Here $h$, being a continuous map of profinite graphs (as each $h^{i}$ is so), is also a uniformly continuous map of profinite graphs. Clearly, $fh =g$. Also, the choice of $h$ is unique by Lemma~\ref{l:Uniqueness of lifts}.

Conversely, let such $h$ exist. Let $i \in I$ and let $R_{i}, S_{i} $ be as in remark~2 to  Definition~\ref{d:Covering}. Then $T_{i} = (g\times g)^{-1}(S_{i})\cap (h\times h)^{-1}(R_{i})$ is a compatible cofinite entourage over $\Sigma$ By the lifting criterion of finite graph theory it follows that $g_{S_iT_i}\,\pi_1(\Sigma/T_i,T_i[c])\subseteq f_i\,\pi_1(\Gamma_i,a_i)$.

\end{proof}


\section{Covering transformations}

We begin by defining homomorphisms and isomorphisms of covering graphs of a given profinite graph in the usual way. Here, and elsewhere, let $\Delta$ be a fixed profinite graph.

\begin{definition}\rm
Let $(\Gamma_1,f_1)$ and $(\Gamma_2,f_2)$ be profinite covering graphs of $\Delta$. 
A {\it homomorphism\/} from $(\Gamma_1,f_1)$ to $(\Gamma_2,f_2)$ is a continuous map of graphs $h\from\Gamma_1\to\Gamma_2$ such that $f_2h=f_1$.
A homomorphism $h$ from $(\Gamma_1,f_1)$ to $(\Gamma_2,f_2)$ is called an 
{\it isomorphism\/} if there exists a homomorphism $g$ from $(\Gamma_2,f_2)$ to $(\Gamma_1,f_1)$ such that both $gh$ and $hg$ are identity maps. 
\end{definition}

An isomorphism $h$ of a profinite covering graph $(\Gamma,f)$ of $\Delta$ to itself is called an automorphism or {\it covering transformation\/}. 
The set of all covering transformations of $(\Gamma,f)$ is a group under composition of maps; we denote it by $\Aut(\Gamma,f)$. 

As an immediate 
consequence of Lemma~\ref{l:Uniqueness of lifts} we see that:
if $(\Gamma_1,f_1)$ and $(\Gamma_2,f_2)$ are connected profinite covering graphs of $\Delta$, and if $h_1$ and $h_2$ are homomorphisms from $(\Gamma_1,f_1)$ to $(\Gamma_2,f_2)$ such that $h_1(a)=h_2(a)$ for some $a\in\Gamma_1$, then $h_1=h_2$.
Applying this to covering transformations, we obtain the following. 

\begin{lemma}
If $f\from\Gamma\to\Delta$ is a covering of profinite graphs and $\Gamma$ is connected, then the group of covering transformations $\Aut(\Gamma,f)$ acts freely on $\Gamma$.
\end{lemma}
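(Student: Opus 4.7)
The plan is to deduce this directly from the uniqueness-of-lifts observation highlighted just before the statement: if $(\Gamma_1,f_1)$ and $(\Gamma_2,f_2)$ are connected profinite covering graphs of $\Delta$ and $h_1,h_2\from(\Gamma_1,f_1)\to(\Gamma_2,f_2)$ are homomorphisms that agree at a single point, then $h_1=h_2$. Freeness of the action of $\Aut(\Gamma,f)$ on $\Gamma$ means exactly that any covering transformation fixing some element of $\Gamma$ must be the identity, and this will fall out of the above observation applied to the covering graph $(\Gamma,f)$ as both the source and the target.

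To carry this out, suppose $\sigma\in\Aut(\Gamma,f)$ satisfies $\sigma(a)=a$ for some $a\in\Gamma$. I consider the two homomorphisms $\sigma$ and $\id_{\Gamma}$ from the covering graph $(\Gamma,f)$ to itself: both are continuous maps of graphs $\Gamma\to\Gamma$, and both satisfy the compatibility condition with respect to $\Delta$, namely $f\circ\sigma=f$ (since $\sigma$ is a covering transformation) and $f\circ\id_{\Gamma}=f$ (trivially). Hence both qualify as homomorphisms $(\Gamma,f)\to(\Gamma,f)$ in the sense of the preceding definition.

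Since $\Gamma$ is assumed connected and $\sigma(a)=\id_{\Gamma}(a)=a$, the consequence of Lemma~\ref{l:Uniqueness of lifts} quoted above yields $\sigma=\id_{\Gamma}$. Thus the only element of $\Aut(\Gamma,f)$ with any fixed point is the identity, which is precisely the freeness of the action.

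I do not expect a serious obstacle here: the only small point to watch is that the observation we invoke was phrased for arbitrary points $a\in\Gamma_1$ (not only vertices), matching the scope of Lemma~\ref{l:Uniqueness of lifts}, so the fixed element $a$ may be a vertex or an edge of $\Gamma$ and the argument is unaffected. Everything else is a direct unwinding of the definitions of covering transformation and of free action.
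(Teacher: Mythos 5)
Your proof is correct and is exactly the argument the paper intends: the lemma is stated as an immediate consequence of the uniqueness-of-lifts observation, obtained by applying it to the two homomorphisms $\sigma$ and $\id_\Gamma$ from $(\Gamma,f)$ to itself, which agree at the fixed point. Nothing further is needed.
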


A group action of an abstract group $G$ on a uniform space $X$ is said to be {\it uniformly equicontinous\/} if the set of translations $\{x\mapsto g\cdot x\mid g\in G\}$ is a uniformly equicontinous family of functions from $X$ to itself, that is, if for each entourage $W$ of $X$, there is an entourage $V$ of $X$ such that 
$(g\times g)\cdot V\subseteq W$ for all $g\in G$.

\begin{lemma}\label{uniform equicont}
If $f\from\Gamma\to\Delta$ is a covering of profinite graphs and $\Gamma$ is connected, then the group $\Aut(\Gamma,f)$ acts uniformly equicontinuously  on~$\Gamma$.
\end{lemma}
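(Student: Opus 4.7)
The plan is to exploit the fundamental system $\{R_i\mid i\in I\}$ of compatible cofinite entourages on $\Gamma$ from Remark~2. Since this family is cofinal in the entourage filter of $\Gamma$, it suffices to show that for every $i\in I$ there exists $k\in I$ such that $(g\times g)[R_k]\subseteq R_i$ for all $g\in G:=\Aut(\Gamma,f)$.

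For each $g\in G$, the relation $(g\times g)^{-1}(R_i)$ is the kernel of the continuous graph map $\phi_i g\colon\Gamma\to\Gamma_i$ and hence is itself a compatible cofinite entourage of $\Gamma$, with index bounded by $|\Gamma_i|$. The key step, which I expect to be the main obstacle, is to control the family $\{(g\times g)^{-1}(R_i)\mid g\in G\}$ and in particular to show it is \emph{finite}. Equivalently, I would prove that $\{\phi_i g\mid g\in G\}$ is a finite set of maps. Because $fg=f$, one has $f_i(\phi_i g)=\psi_i f=f_i\phi_i$, so every $\phi_i g$ is a lift of the single fixed continuous map $f_i\phi_i\colon\Gamma\to\Delta_i$ along the finite cover $f_i$. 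Any such $\phi_i g$ is constant on the classes of some compatible cofinite entourage; after refining to some $R_\ell$ in the fundamental system (which by Remark~4 we may assume yields a path-connected finite quotient $\Gamma/R_\ell$) both $\phi_i g$ and $f_i\phi_i$ descend to maps on $\Gamma/R_\ell$. The classical uniqueness of lifts for maps of connected finite graphs then shows that the induced map $\Gamma/R_\ell\to\Gamma_i$ is determined by its value at the class $R_\ell[c]$ of a fixed basepoint $c\in V(\Gamma)$. Consequently $\phi_i g$ is determined by $\phi_i g(c)\in V(\Gamma_i)$, and so $\{\phi_i g\mid g\in G\}$ has cardinality at most $|V(\Gamma_i)|$.

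With finiteness in hand, set $R_j:=\bigcap_{g\in G}(g\times g)^{-1}(R_i)$. This is a \emph{finite} intersection of compatible cofinite entourages, hence itself a compatible cofinite entourage of $\Gamma$. By cofinality of the fundamental system, choose $k\in I$ with $R_k\subseteq R_j$. Then for any $g\in G$ and any $(a,b)\in R_k$ one has $(a,b)\in(g\times g)^{-1}(R_i)$, so $(g(a),g(b))\in R_i$, which is precisely the uniform equicontinuity required.
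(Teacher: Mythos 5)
Your argument is correct and follows essentially the same route as the paper: both proofs reduce to showing that $\{\phi_i\alpha\mid\alpha\in\Aut(\Gamma,f)\}$ is finite because all these maps are lifts of the single map $\psi_i f=f_i\phi_i$ and a lift is determined by its value at one point of the connected graph $\Gamma$, and then both take the finite intersection of the pulled-back entourages $(\alpha\times\alpha)^{-1}[R_i]$. The only cosmetic difference is that the paper cites its Lemma~\ref{l:Uniqueness of lifts} for the uniqueness step, whereas you re-derive it by descending to a path-connected finite quotient $\Gamma/R_\ell$ and invoking the classical uniqueness of lifts for finite graphs.
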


\begin{proof}
Let $(f_i\from\Gamma_i\to\Delta_i,\phi_{ij},\psi_{ij})$ be an inverse system of locally bijective maps of finite discrete graphs such that $f=\varprojlim f_i$. Suppose that $I$ is the directed index set corresponding to this inverse system. Then for $i\in I$ and each $\alpha\in\Aut(\Gamma,f)$, the map $\phi_i\alpha$ is a lift of $\psi_if$ to the finite covering graph $(\Gamma_i,f_i)$ of $\Delta_i$.
$$\begindc{\commdiag}[30]
\obj(0,0)[G1]{$\Gamma$}
\obj(2,0)[D]{$\Delta$}
\obj(2,2)[G2]{$\Gamma$}
\obj(3,0)[Di]{$\Delta_i$}
\obj(3,2)[Gi]{$\Gamma_i$}
\mor{G1}{D}{$f$}
\mor{G2}{D}{$f$}
\mor{G1}{G2}{$\alpha$}[\atleft,\solidarrow][
\mor{D}{Di}{$\psi_i$}
\mor{G2}{Gi}{$\phi_i$}
\mor{Gi}{Di}{$f_i$}
\enddc$$
Since, by Lemma~\ref{l:Uniqueness of lifts}, two lifts of $\psi_if$ are equal if they agree at one point, it follows that the set of maps
$A_i=\{\phi_i\alpha\from\Gamma\to\Gamma_i\mid \alpha\in \Aut(\Gamma,f)\}$ is finite, since $\Gamma_i$ is finite. 
For each $g=\phi_i\alpha\from\Gamma\to\Gamma_i$ in $A_i$, let 
$T_g=g^{-1}g=\alpha^{-1}\phi_i^{-1}\phi_i\alpha=(\alpha\times\alpha)^{-1}[R_i]$. It follows that the (finite) intersubsection $T=\bigcap_{g\in A_i}T_g$ is a compatible cofinite entourage of $\Gamma$. 
Now, for each $\alpha\in\Aut(\Gamma,f)$, $\phi_i\alpha=g$ for some $g\in A_i$ and hence $(\alpha\times\alpha)[T]\subseteq(\alpha\times\alpha)[T_g]\subseteq R_i$. However, the set of all $R_i$, $i\in I$, is a fundamental system of entourages of $\Gamma$. Thus it follows that the action of $\Aut(\Gamma,f)$ on $\Gamma$ is uniformly equicontinous.
\end{proof}

Given a connected profinite covering graph $(\Gamma,f)$ of $\Delta$, the previous lemma shows that $\Aut(\Gamma,f)$ acts uniformly equicontinuously on $\Gamma$. Thus, by ~\cite{ACD13}, $\Aut(\Gamma,f)$, with the uniformity induced by its action on $\Gamma$, is a cofinite group. From here on, we will always endow the group of covering transformations of a connected profinite covering graph with its cofinite group structure arising in this way. Moreover, we next prove that even more is true.

For more details about groups acting on cofinite and profinite graphs see ~\cite{ACD13}.

\begin{lemma}
If $f\from\Gamma\to\Delta$ is a covering of profinite graphs and $\Gamma$ is connected, then $\Aut(\Gamma,f)$ is a profinite group.

\begin{proof}
Since $G = \Aut(\Gamma,f)$ acts uniformly equicontinuously  on~$\Gamma$. Hence, $\widehat{G}$, the profinite completion of $G$ with respect to the induced separating filter base of cofinite congruences obtained from the action of $G$ over $\Gamma$, also acts over $\Gamma$ uniformly equicontinuously. In order to show that $G$ is compact we wish to show that $\widehat{G} \subseteq G$. Let $\alpha \in \widehat{G}$. Then since the action of $\widehat{G}$ over $\Gamma$ preserves the graph structure of $\Gamma, \alpha:\Gamma\rightarrow \Gamma$ is a map of graphs. Also $\alpha$ is continuous as the group action of $\widehat{G}$ over $\Gamma$ is uniformly continuous, by ~\cite{ACD13}. Also $\alpha^{-1} \in \widehat{G}$, as $\widehat{G}$ is a group. So, $\alpha:\Gamma\rightarrow \Gamma$ is a continuous map of graphs with continuous inverse. 

Now in order to show $\alpha \in G$ we have to prove $f\alpha = f$. Note that $\alpha = (N_{R}[\alpha_{R}])_{R\in J}$ where $J$ is the fundamental system of $G$-invariant compatible cofinite entourages over $\Gamma, N_R$ is the cofinite congruence over $G$ corresponding to the compatible cofinite entourage $R$, due to the group action $G\times\Gamma\to\Gamma$, and $\alpha_R\in G, $ for all $R\in J$. So, for any $i \in I$, there exists a $G$-invariant compatible cofinite entourage $R \subseteq R_{i}$. It follows that for all $i \in I$, and for all $x \in \Gamma$, since $f\circ\alpha_R = f$  

it follows that $f\circ\alpha = f$.
\end{proof}
\end{lemma}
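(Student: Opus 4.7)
The plan is to show that $G=\Aut(\Gamma,f)$, already known to be a cofinite group via the uniformity induced by its uniformly equicontinuous action on $\Gamma$ (by the previous lemma together with the theory of cofinite group actions from \cite{ACD13}), coincides with its own profinite completion $\widehat G$; this forces $G$ to be compact Hausdorff in its cofinite topology and hence profinite.

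First I would form $\widehat G$, the completion of $G$ with respect to the separating filter base of cofinite congruences on $G$ arising from the $G$-invariant compatible cofinite entourages of $\Gamma$. Since the action $G\times\Gamma\to\Gamma$ is uniformly equicontinuous, it extends uniquely to a uniformly equicontinuous action $\widehat G\times\Gamma\to\Gamma$; in particular, every $\alpha\in\widehat G$ gives a uniformly continuous self-map of $\Gamma$. Because $\widehat G$ is a group, $\alpha^{-1}\in\widehat G$ also acts, and the two actions invert each other, so $\alpha\from\Gamma\to\Gamma$ is a uniform homeomorphism. Preservation of the graph structure (vertices, edges, source, target, edge-reversal) passes from $G$ to $\widehat G$ by continuity and density, so $\alpha$ is an automorphism of the profinite graph $\Gamma$.

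Next, the real content is to show $f\alpha=f$ for each $\alpha\in\widehat G$, so that $\alpha\in\Aut(\Gamma,f)=G$. Writing $\alpha=(N_R[\alpha_R])_{R\in J}$ where $J$ indexes a fundamental system of $G$-invariant compatible cofinite entourages of $\Gamma$, $N_R$ is the associated cofinite congruence on $G$, and $\alpha_R\in G$, I would argue as follows. Given $x\in\Gamma$ and any $i\in I$, choose $R\in J$ refining $R_i$. By definition of the extended action, $(\alpha(x),\alpha_R(x))\in R\subseteq R_i$. Applying the projection $\phi_i$ and using $f_i\phi_i=\psi_i f$ together with $f\alpha_R=f$ (because $\alpha_R\in G$), one obtains $\psi_i(f\alpha(x))=\psi_i(f(x))$ for every $i\in I$. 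Since $\{S_i\mid i\in I\}$ is a fundamental system of entourages for $\Delta$ and $\Delta$ is Hausdorff, this forces $f\alpha(x)=f(x)$; as $x$ was arbitrary, $f\alpha=f$ and $\alpha\in G$.

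Finally, $G=\widehat G$ means $G$ is complete in its cofinite uniformity, so its cofinite group structure is in fact that of a compact topological group, i.e., $G$ is profinite. The main obstacle is the passage from the approximating elements $\alpha_R$ to the limit $\alpha$ while preserving the identity $f\alpha=f$; everything hinges on carefully comparing $\alpha(x)$ and $\alpha_R(x)$ modulo a sufficiently fine $G$-invariant entourage refining $R_i$ and on the Hausdorffness of $\Delta=\varprojlim\Delta_i$, which together let the equality $f\alpha_R=f$ propagate through the inverse limit.
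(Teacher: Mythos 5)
Your proposal follows essentially the same route as the paper's proof: form the profinite completion $\widehat G$ with respect to the cofinite congruences induced by the uniformly equicontinuous action, extend the action to $\widehat G$, and show each $\alpha\in\widehat G$ satisfies $f\alpha=f$ by comparing $\alpha(x)$ with the approximants $\alpha_R(x)$ modulo a $G$-invariant entourage refining $R_i$, so that $\widehat G\subseteq G$ and $G$ is compact. Your write-up actually supplies the details of the final step ($(\alpha(x),\alpha_R(x))\in R\subseteq R_i$, hence $\psi_i(f\alpha(x))=\psi_i(f(x))$ for all $i$, plus Hausdorffness of $\Delta$) that the paper leaves implicit, but the argument is the same.
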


Combining the above lemmas, we have the following theorem.

\begin{theorem}
Let $f\from\Gamma\to\Delta$ be a covering of profinite graphs, where $\Gamma$ is connected. 
Then the group of covering transformations $\Aut(\Gamma,f)$ is a profinite group that acts freely and uniformly equicontinuously on~$\Gamma$. 
\end{theorem}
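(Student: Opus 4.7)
The plan is that this theorem is essentially just the concatenation of the three lemmas immediately preceding it, so the proof should be very short: a one-line invocation of the form ``combining the above three lemmas.'' First, freeness of the action comes from the unnumbered lemma just before Lemma~\ref{uniform equicont}, which itself rests on the uniqueness-of-lifts result (Lemma~\ref{l:Uniqueness of lifts}): any $\alpha\in\Aut(\Gamma,f)$ fixing a vertex $a$ is a lift of $f$ (through $f$ itself) sending $a$ to $a$, as is $\id_\Gamma$, so the two agree on the connected graph $\Gamma$. Second, uniform equicontinuity of the $\Aut(\Gamma,f)$-action on $\Gamma$ is exactly the content of Lemma~\ref{uniform equicont}. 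Third, that $\Aut(\Gamma,f)$ is a profinite group, when endowed with the cofinite group structure arising from its uniformly equicontinuous action on $\Gamma$, is the lemma immediately above, whose proof amounts to showing that the profinite completion $\widehat G$ with respect to the induced separating filter base of cofinite congruences maps back into $G$ itself (so $G=\widehat G$ is compact).

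Since all three ingredients are already in hand there is no real obstacle. The only point worth flagging in the write-up is that the three assertions must refer to a single compatible uniformity on $\Aut(\Gamma,f)$: namely the cofinite group structure induced by the action of $\Aut(\Gamma,f)$ on $\Gamma$, whose existence is itself a consequence of uniform equicontinuity (via the construction from \cite{ACD13} recalled in the paragraph after Lemma~\ref{uniform equicont}). With this consistency noted, the theorem follows immediately by juxtaposing the three lemmas, and no further argument is required.
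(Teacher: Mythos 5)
Your proposal matches the paper exactly: the theorem is stated there as an immediate consequence of the three preceding lemmas (freeness via uniqueness of lifts, uniform equicontinuity, and profiniteness of $\Aut(\Gamma,f)$ with the cofinite structure induced by its action), and the paper's ``proof'' is literally the sentence ``Combining the above lemmas, we have the following theorem.'' Your added remark about fixing a single compatible uniformity on $\Aut(\Gamma,f)$ is a reasonable point of care that the paper handles in the paragraph following Lemma~\ref{uniform equicont}.
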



\section{Regular coverings}

In light of our definition of a covering of profinite graphs (Definition~\ref{d:Covering}), it seems only natural to define a regular covering of profinite graphs to be a projective limit of regular coverings of finite path connected graphs. 
This is what we do in this subsection. However, in the next subsection we will see that there is an equivalent (more intrinsic) way to describe regular coverings. 

\begin{definition}[Regular covering] \label{d:Regular covering} \rm 
A map $f\from\Gamma\to\Delta$ of profinite graphs is a {\it regular covering\/} if it is the inverse limit of an inverse system of maps of graphs 
$(f_i\from\Gamma_i\to\Delta_i,\phi_{ij},\psi_{ij}), i,j \in I, j\geq i$ where each $f_i\from\Gamma_i\to\Delta_i$ is a regular covering of path connected finite discrete graphs.
In this situation, we call the pair $(\Gamma,f)$ a {\it regular profinite covering graph\/} of~$\Delta$.
\end{definition}

Throughout this section let $f\from \Gamma \to \Delta$ be a regular covering of profinite graph which is the inverse limit of an inverse system of maps of graphs 
$(f_i\from\Gamma_i\to\Delta_i,\phi_{ij},\psi_{ij}), i,j \in I, j\geq i$ where each $f_i\from\Gamma_i\to\Delta_i$ is a regular covering of path connected finite discrete graphs. We know that in this situation, all $\Gamma_{i}, \Delta_{i}$ are path connected as in remark~4 in section~\ref{s:Definition}. Also we will assume everywhere in this section that $G = \Aut(\Gamma,f)$, 
the group of covering transformations of the regular profinite covering graph $\Gamma$ corresponding to f.

We first show that, just as for regular coverings of path connected graphs and spaces in the classical sense, the group of covering transformations of a regular profinite covering graph acts transitively on each fiber.

\begin{lemma}
If $a_1$, $a_2$ are vertices of $\Gamma$ such that $f(a_1)=f(a_2)$, then there exists a unique covering transformation $\alpha$ of $(\Gamma,f)$ such that $\alpha(a_1)=a_2$.
\end{lemma}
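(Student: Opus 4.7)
The plan is to reduce to the finite, path-connected case at each index $i\in I$ and then assemble the resulting automorphisms into a single covering transformation of $\Gamma$.

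Uniqueness is almost immediate. Since $\Gamma=\varprojlim\Gamma_i$ with each $\Gamma_i$ path connected, $\Gamma$ is a connected profinite graph. If $\alpha,\beta\in\Aut(\Gamma,f)$ both send $a_1$ to $a_2$, then $f\alpha=f=f\beta$ and $\alpha(a_1)=\beta(a_1)$, so Lemma~\ref{l:Uniqueness of lifts} (applied with $\Sigma=\Gamma$ and $c=a_1$) yields $\alpha=\beta$.

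For existence, write $a_1=(a_1^i)_{i\in I}$ and $a_2=(a_2^i)_{i\in I}$, so that $f_i(a_1^i)=f_i(a_2^i)$ for every $i$. Because each $f_i\from\Gamma_i\to\Delta_i$ is a regular covering of path connected finite discrete graphs, the classical theory gives a unique covering transformation $\alpha_i\in\Aut(\Gamma_i,f_i)$ with $\alpha_i(a_1^i)=a_2^i$; here uniqueness already uses that $\Gamma_i$ is connected and $f_i$ is a covering. The key step is then to verify the compatibility identity
$$\phi_{ij}\,\alpha_j=\alpha_i\,\phi_{ij}\qquad\text{for all }j\ge i.$$
Both composites are maps $\Gamma_j\to\Gamma_i$, and $f_i(\phi_{ij}\alpha_j)=\psi_{ij}f_j\alpha_j=\psi_{ij}f_j=f_i\phi_{ij}$, while $f_i(\alpha_i\phi_{ij})=f_i\phi_{ij}$. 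Hence both are lifts of $f_i\phi_{ij}$ through the locally bijective map $f_i$, and they coincide at the vertex $a_1^j$ since $\phi_{ij}(\alpha_j(a_1^j))=\phi_{ij}(a_2^j)=a_2^i=\alpha_i(a_1^i)=\alpha_i(\phi_{ij}(a_1^j))$. The uniqueness of lifts for coverings of path connected finite graphs (applied in $\Gamma_j$) then forces $\phi_{ij}\alpha_j=\alpha_i\phi_{ij}$.

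With compatibility in hand, the family $(\alpha_i)$ defines a continuous graph map $\alpha\from\Gamma\to\Gamma$ by $\alpha(x)=(\alpha_i(\phi_i(x)))_{i\in I}$; equivalently $\phi_i\alpha=\alpha_i\phi_i$ for every $i$. This makes sense in $\varprojlim\Gamma_i=\Gamma$ precisely because of the compatibility just established, and continuity follows since each $\alpha_i\phi_i$ is continuous. To see $f\alpha=f$, compose with $\psi_i$ to get $\psi_if\alpha=f_i\phi_i\alpha=f_i\alpha_i\phi_i=f_i\phi_i=\psi_if$, and then use that the family $(\psi_i)$ separates points of $\Delta$. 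By construction $\alpha(a_1)=a_2$. Finally, running the same construction with the roles of $a_1$ and $a_2$ reversed produces a map $\beta\from\Gamma\to\Gamma$ with $f\beta=f$ and $\beta(a_2)=a_1$; then $\alpha\beta$ and $\beta\alpha$ agree with $\id_\Gamma$ at a vertex and lift the identity through $f$, so Lemma~\ref{l:Uniqueness of lifts} gives $\alpha\beta=\id_\Gamma=\beta\alpha$. Hence $\alpha\in\Aut(\Gamma,f)$.

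The only real obstacle is the compatibility step $\phi_{ij}\alpha_j=\alpha_i\phi_{ij}$; everything else is formal assembly or a direct application of Lemma~\ref{l:Uniqueness of lifts}. Once compatibility is secured via the finite-graph uniqueness of lifts, the rest of the argument is bookkeeping in the inverse limit.
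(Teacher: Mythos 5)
Your proof is correct, and it accomplishes the same reduction the paper intends, but by a noticeably more hands-on route. The paper's own proof is a two-line appeal to Theorem~\ref{Lifting Criterion} and Lemma~\ref{l:Uniqueness of lifts}: one lifts $f\from(\Gamma,a_1)\to(\Delta,b)$ through $f\from(\Gamma,a_2)\to(\Delta,b)$, the $\pi_1$-hypothesis of the lifting criterion being satisfied because each $f_i$ is a regular covering of path connected finite graphs, so $f_i\,\pi_1(\Gamma_i,a_{1,i})=f_i\,\pi_1(\Gamma_i,a_{2,i})$ is the normal subgroup attached to $f_i$. You instead bypass the general lifting criterion entirely: you produce the deck transformation $\alpha_i$ at each finite level from the classical transitivity of $\Aut(\Gamma_i,f_i)$ on fibers, prove the compatibility $\phi_{ij}\alpha_j=\alpha_i\phi_{ij}$ by the finite-graph uniqueness of lifts (this is indeed the only nontrivial step, and your verification is right), and assemble $\alpha=\varprojlim\alpha_i$. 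What your approach buys is self-containedness and explicitness --- the covering transformation is visibly an inverse limit of finite deck transformations, which is exactly the description the paper needs again in the very next lemma ($h=\varprojlim h_i$); what the paper's approach buys is brevity, since the inverse-limit bookkeeping was already done once and for all in the proof of Theorem~\ref{Lifting Criterion}. One small economy you could have taken: rather than rerunning the construction to get $\beta$, note that $\alpha_i^{-1}$ is itself a compatible family, or simply cite Lemma~\ref{l:Uniqueness of lifts} as you did; either way the invertibility argument is sound.
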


\begin{proof}
Let $f(a_1)=f(a_2) = b = (b_{i})_{i \in I}$ and $a_1 = (a_{1,i})_{i \in I}, a_2 = (a_{2,i})_{i \in I}$. Since, $f_i$ restricted to $\Gamma/R_i$ can be viewed as the natural map of graphs 
$f_{S_{i}R_{i}}\from \Gamma/R_i \to \Delta/S_i$ and $f_{i}\from(\Gamma_{i},a_{1,i}) \to (\Delta_{i}, b_{i}), f_{i}\from(\Gamma_{i},a_{2,i}) \to (\Delta_{i}, b_{i})$ are regular covering of finite graphs by Theorem~\ref{Lifting Criterion} and Lemma~\ref{l:Uniqueness of lifts} the result follows.
\end{proof}

\begin{lemma}
Given any $h \in G , h = \varprojlim h_{i}$ where $h_{i}$ is a covering transformation of the finite covering graph $(\Gamma_{i}, f_{i})$ and $\{R_{i} \mid i \in I\}$ is a fundamental system of $G$-invarient compatible cofinite entourages over $\Gamma$.

\end{lemma}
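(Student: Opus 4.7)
The plan is to reduce everything to a single key claim: for each $h \in G$ and each $i \in I$, there is a unique covering transformation $h_i \in \Aut(\Gamma_i, f_i)$ of the finite regular cover satisfying $\phi_i \circ h = h_i \circ \phi_i$. Granted this, the two assertions of the lemma fall out by direct bookkeeping. The $G$-invariance of $R_i = \phi_i^{-1}\phi_i$ is immediate, since $(x,y) \in R_i$ forces $\phi_i(h(x)) = h_i\phi_i(x) = h_i\phi_i(y) = \phi_i(h(y))$, so $(h(x), h(y)) \in R_i$. Combined with the previously established fact that $\{R_i\}$ is a fundamental system of compatible cofinite entourages, this gives the second assertion. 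For the first, the family $(h_i)_{i \in I}$ is compatible with the bonding maps $\phi_{ij}$ (checked below), and so defines an element $\varprojlim h_i \in \varprojlim \Aut(\Gamma_i, f_i)$ whose action on $\Gamma = \varprojlim \Gamma_i$ agrees with $h$ coordinatewise; thus $h = \varprojlim h_i$.

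To establish the key claim, fix any vertex $c \in \Gamma$ and set $c_i = \phi_i(c)$. Since $fh = f$, applying $\psi_i$ gives $f_i\phi_i(h(c)) = \psi_i f(c) = f_i(c_i)$, so $\phi_i(h(c))$ and $c_i$ lie in the same fiber of the regular covering $f_i$. Because $f_i$ is a regular covering of path-connected finite graphs, $\Aut(\Gamma_i, f_i)$ acts freely and transitively on every fiber, so there is a unique $h_i \in \Aut(\Gamma_i, f_i)$ with $h_i(c_i) = \phi_i(h(c))$. Both $\phi_i h$ and $h_i\phi_i$ are continuous graph maps $\Gamma \to \Gamma_i$; both are lifts of $\psi_i f$ through $f_i$ (since $f_i h_i = f_i$ and $f_i \phi_i = \psi_i f$), and they agree at the vertex $c$. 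The connectedness of $\Gamma$ (standing hypothesis for the section dealing with $G$) together with Lemma~\ref{l:Uniqueness of lifts} then forces $\phi_i h = h_i \phi_i$ on all of $\Gamma$. Uniqueness of such $h_i$ follows from the freeness of the $\Aut(\Gamma_i, f_i)$-action together with surjectivity of $\phi_i$.

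For compatibility across the inverse system, given $j \ge i$ the two maps $\phi_{ij} h_j$ and $h_i \phi_{ij}$ from $\Gamma_j$ to $\Gamma_i$, precomposed with the surjection $\phi_j$, each yield $\phi_i h$; hence $\phi_{ij} h_j = h_i \phi_{ij}$, so $(h_i)_{i \in I}$ is coherent and defines a genuine element of $\varprojlim \Aut(\Gamma_i, f_i)$ that acts as $h$ under the identification $\Gamma = \varprojlim \Gamma_i$.

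The main obstacle is avoiding the obvious but circular attempt to \emph{define} $h_i$ on the quotient $\Gamma/R_i$ by $h_i(\phi_i(x)) := \phi_i(h(x))$, since well-definedness of this formula is precisely the $G$-invariance of $R_i$ that we wish to deduce. The point of the plan is to first \emph{produce} $h_i$ intrinsically on $\Gamma_i$ from basepoint transitivity of $\Aut(\Gamma_i, f_i)$, and only then use the uniqueness-of-lifts lemma to verify the commutation $\phi_i h = h_i \phi_i$; this sidesteps the circularity cleanly and leaves only routine verifications.
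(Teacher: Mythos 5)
Your proposal is correct and follows essentially the same route as the paper: obtain each $h_i$ from the simply transitive action of $\Aut(\Gamma_i,f_i)$ on a fiber of the finite regular covering $f_i$ (using $f_i\phi_i(h(c))=f_i(c_i)$), invoke Lemma~\ref{l:Uniqueness of lifts} to get $\phi_i h = h_i\phi_i$ and the coherence of the family $(h_i)$, and then read off the $G$-invariance of $R_i$. Your version is slightly more explicit than the paper's about checking compatibility with the bonding maps $\phi_{ij}$ and about avoiding the circular definition of $h_i$ on $\Gamma/R_i$, but the underlying argument is the same.
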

\begin{proof}
Consider $a \in V(\Gamma)$ and let $h(a) = c.$  Then $f(c)= fh(a) = f(a) = b,$ where $a = (a_{i})_{i\in I}, b = (b_{i})_{i\in I}, c = (c_{i})_{i\in I}$. By the lifting criteria of Path Connected Coverings, for each $i \in I$, 
there exists a unique bijective map of graphs $h_i\from (\Gamma_i, a_i)\to (\Gamma_i, c_i)$ such that $f_{i}h_{i} = f_i$. Define $\alpha\from \Gamma \to \Gamma$ as $\alpha(x) = (h_{i}(x_{i}))_{i \in I}$ where $x = (x_{i})_{i \in I}$ . 
   By  Lemma~\ref{l:Uniqueness of lifts}, $\alpha$ is a well defined continuous map of graphs as each $h_i$ is so.  It can be shown that $f\alpha = f$.  By the uniqueness part in Lemma~\ref{l:Uniqueness of lifts} we have $\alpha = h$. Again, let $x = (x_i)_{i\in I}, y = (y_i)_{i\in I}$ and $(x,y) \in R_{i}$. So, $h_{i}(x_{i}) = h_{i}(y_{i})$. This implies $\phi_{i}h(x) = \phi_{i}h(y)$. So, $(h(x), h(y)) \in R_{i}$, for all $h\in G$. Hence the lemma follows.

\end{proof}

\begin{lemma}\label{resi free act}
Let $f\from\Gamma\to\Delta$ be a regular covering of profinite graphs. Then $\Gamma$ has a fundamental system of entourages consisting of $G$-invariant compatible cofinite equivalence relations $R$ such that the induced faithful action of $G/N_R$ on $\Gamma/R$ is free, where $N_R$ is the kernel of the induced action of $G$ on $\Gamma/R$.
\end{lemma}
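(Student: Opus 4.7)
The plan is to verify that the fundamental system $\{R_i \mid i\in I\}$ of compatible cofinite entourages introduced in Remark~2 of Section~\ref{s:Definition} already satisfies all the properties demanded by the statement; no new construction is needed. Since $\{R_i\}$ is already a fundamental system of compatible cofinite entourages of $\Gamma$, the only things left to check are that each $R_i$ is $G$-invariant and that the induced action of $G/N_{R_i}$ on $\Gamma/R_i$ is free.

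For $G$-invariance I would invoke the preceding lemma, which writes every $h\in G$ as an inverse limit $h=\varprojlim h_i$ with $h_i\in\Aut(\Gamma_i,f_i)$ and $\phi_i\circ h=h_i\circ\phi_i$. If $(x,y)\in R_i$ then $\phi_i(x)=\phi_i(y)$, hence
\[
\phi_i(h(x)) = h_i(\phi_i(x)) = h_i(\phi_i(y)) = \phi_i(h(y)),
\]
so $(h(x),h(y))\in R_i$. The same formula packages itself into a group homomorphism $\rho_i\from G\to\Aut(\Gamma_i,f_i)$, $h\mapsto h_i$, which will be the bridge to classical covering theory at the finite level.

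For freeness, the key input is that $f_i\from\Gamma_i\to\Delta_i$ is a regular covering of a path connected finite graph, so the classical theory gives that $\Aut(\Gamma_i,f_i)$ acts freely on $\Gamma_i$. I would first identify $\ker\rho_i$ with $N_{R_i}$: on the one hand, $h_i=\id_{\Gamma_i}$ forces $\phi_i(h(y))=\phi_i(y)$ for every $y\in\Gamma$, placing $h\in N_{R_i}$; on the other, if $h\in N_{R_i}$ then $h_i$ fixes pointwise the nonempty subset $\phi_i(\Gamma)\subseteq\Gamma_i$, and freeness of $\Aut(\Gamma_i,f_i)$ then forces $h_i=\id_{\Gamma_i}$. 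Consequently $\rho_i$ descends to an injection $G/N_{R_i}\hookrightarrow\Aut(\Gamma_i,f_i)$, and the action of $G/N_{R_i}$ on $\Gamma/R_i=\phi_i(\Gamma)\subseteq\Gamma_i$ is the restriction of the free action on $\Gamma_i$, hence free as required.

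The only step that calls for real care is the identification $\ker\rho_i=N_{R_i}$, which uses that $\phi_i(\Gamma)$ is a nonempty subgraph of the path connected graph $\Gamma_i$, together with the fact that a covering transformation of a connected covering which fixes a single vertex must be the identity. Once this is in place, the rest of the argument is bookkeeping, and I do not anticipate any deeper obstacle.
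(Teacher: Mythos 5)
Your proposal is correct and follows essentially the same route as the paper: both arguments rest on the decomposition $h=\varprojlim h_i$ from the preceding lemma together with the classical fact that a covering transformation of a connected finite covering graph fixing a single vertex is the identity. Your packaging of this via the homomorphism $\rho_i\from G\to\Aut(\Gamma_i,f_i)$ and the identification $\ker\rho_i=N_{R_i}$ is only a cosmetic reorganization of the paper's direct verification that a class $N_{R_i}[h]$ fixing some $R_i[x]$ must be trivial.
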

\begin{proof}
$f$ is a covering of a profinite connected graph by Lemma~\ref{uniform equicont}, so $G$ acts uniformly equicontinuously over $\Gamma$. Hence $\Gamma$ has a fundamental system of $G$-invariant compatible cofinite entourages.

Consider the group action of $G/N_{R_{i}}$ over $\Gamma/R_{i}$ which is induced by the action of $G$ over $\Gamma$. For details see ~\cite{ACD13}. Let $N_{R_{i}}[h]R_{i}[x] = R_{i}[x]$ for some $h \in G$ and some $R_{i}[x] \in \Gamma/R_i$. So, $R_{i}[hx] = R_{i}[x]$. This means $x_i = R_{i}[x] = R_{i}[hx] = h_i(x_i)$, where $x = (x_{i})_{i\in I}, h = \varprojlim h_{i}$ as in the previous lemma. Since $h_{i}$ fixes one element of $\Gamma_i, h_{i}(y_i) = y_i$, for all $y_i \in \Gamma_i$, as $f_i\circ h_i = h_i$. This means $(hy,y) \in R_i$ for all $y \in \Gamma$. This implies $(h, 1_G) \in N_{R_{i}}$. Thus $N_{R_{i}}[h] = N_{R_i}[1_G]$, the identity element of $G/N_{R_{i}}$. Hence the action of $G/N_{R_{i}}$ over $\Gamma/R_{i}$ is free. 
\end{proof}

For convenience, we give a  name to the property that we just saw holds for the action of the group of covering transformations of a regular profinite covering graph. 

\begin{definition}\label{res free def} \rm
We say that an action of a group $G$ on a profinite graph $\Gamma$ is {\it residually free\/} if there exists a fundamental system $I$ of $G$-invariant compatible cofinite entourages of $\Gamma$ such that for each $R\in I$, the induced faithful action of $G/N_R$ on $\Gamma/R$ is free, where $N_R$ is the kernel of the action of $G$ on $\Gamma/R$.
\end{definition}

The following theorem uses this terminology and summarizes what we have shown so far for regular coverings.

\begin{theorem}\label{res free}
Let $f\from\Gamma\to\Delta$ be a regular covering of profinite graphs. Then $G$ is a profinite group whose action on $\Gamma$ is continuous, residually free, and simply transitive on the fiber $f^{-1}(b)$, for any vertex $b$ of $\Delta$. 
\end{theorem}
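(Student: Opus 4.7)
The plan is to assemble the theorem from the lemmas already established in this and the preceding section; each of the four assertions has effectively been proved, so the task is mainly to verify the hypotheses and combine the results.

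First, I would observe that $\Gamma$ is connected: by Definition~\ref{d:Regular covering} we have $\Gamma=\varprojlim\Gamma_i$ with each $\Gamma_i$ path connected, and the inverse limit of connected profinite graphs is connected. This single observation legitimizes every appeal to the earlier machinery, which required $\Gamma$ to be connected.

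With $\Gamma$ connected, the lemma immediately preceding this theorem gives that $G=\Aut(\Gamma,f)$ is a profinite group, and Lemma~\ref{uniform equicont} gives that the action of $G$ on $\Gamma$ is uniformly equicontinuous. Continuity of the action $G\times\Gamma\to\Gamma$ with respect to the cofinite-group topology on $G$ is then a direct consequence of uniform equicontinuity together with the identification (from~\cite{ACD13}) of this topology with the one induced by the $G$-action on $\Gamma$, the latter being exactly how we endowed $G$ in the paragraph following Lemma~\ref{uniform equicont}. Residual freeness is precisely Lemma~\ref{resi free act}.

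For simple transitivity on each vertex fiber $f^{-1}(b)$, I would combine the first lemma of this section, which, given $a_1,a_2\in f^{-1}(b)$, produces a unique $\alpha\in G$ with $\alpha(a_1)=a_2$, together with the lemma in Section~4 asserting that $G$ acts freely on $\Gamma$. Transitivity and freeness together yield simple transitivity; alternatively, the uniqueness clause of the transitivity lemma already encodes freeness at the vertex level. Since the theorem only asserts simple transitivity on vertex fibers, no separate argument at the edge level is required.

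I do not anticipate a genuine obstacle, as every ingredient is already in hand. The only spot that might briefly fool a reader is the passage from uniform equicontinuity to continuity of the action as a map $G\times\Gamma\to\Gamma$: this requires that the profinite-group topology on $G$ coincides with the one induced by its action on the uniform space $\Gamma$, and this is precisely the convention fixed just after Lemma~\ref{uniform equicont}, so no topology mismatch arises.
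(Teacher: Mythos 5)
Your proposal is correct and matches the paper's treatment: the paper offers no separate argument for this theorem, explicitly describing it as a summary that combines the profinite-group and uniform-equicontinuity lemmas of Section~4 with the fiber-transitivity and residual-freeness lemmas of Section~5, exactly as you do. Your explicit remarks on the connectedness of $\Gamma$ and on why uniform equicontinuity yields continuity of the action are details the paper leaves implicit, but they are consistent with its conventions.
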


In the next subsection we prove that, conversely, every continuous residually free group action of a profinite group on a connected profinite graph gives rise to a regular covering.


\section{Good pairs and residually free group actions}

\begin{definition} \rm
Let $f\from\Gamma\to\Delta$ be a map of profinite graphs and let $R$, $S$ be compatible cofinite entourages of $\Gamma$, $\Delta$. We call $(R,S)$ a {\it good pair\/} for $f$ if 
$(f\times f)[R]\subseteq S$ and the induced map $f_{SR}\from\Gamma/R\to\Delta/S$ is locally bijective. If in addition, $f_{SR}$ is a regular covering of finite path connected graphs, then we say that the good pair $(R,S)$ is {\it regular}. 
\end{definition}

A family $I$ of good pairs for a map $f\from\Gamma\to\Delta$ of profinite graphs is called a {\it fundamental system\/} of good pairs if for all entourages $V$, $W$ of $\Gamma$, $\Delta$, there exists $(R,S)\in I$ such that $(R,S)\subseteq(V,W)$. A fundamental system of regular good pairs for $f$ is defined similarly. 

\begin{lemma}\label{good pair}
If there exists a fundamental system $I$ of good pairs (respectively, regular good pairs) for a map $f\from\Gamma\to\Delta$ of profinite graphs, then $f\from\Gamma\to\Delta$ is a covering (respectively, regular covering). 
\end{lemma}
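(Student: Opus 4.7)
The plan is to use the good pairs themselves as the indexing set for an inverse system whose limit is $f$. Order $I$ by declaring $(R,S)\le(R',S')$ iff $R'\subseteq R$ and $S'\subseteq S$. The fundamental system hypothesis ensures that for any $(R_1,S_1),(R_2,S_2)\in I$, there exists $(R,S)\in I$ with $R\subseteq R_1\cap R_2$ and $S\subseteq S_1\cap S_2$, so $I$ is directed.

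For each $(R,S)\in I$, let $\Gamma_{(R,S)}=\Gamma/R$, $\Delta_{(R,S)}=\Delta/S$, and take the locally bijective map $f_{SR}\from\Gamma/R\to\Delta/S$ guaranteed by the good pair condition. Whenever $(R,S)\le(R',S')$, the quotient maps $\phi_{(R,S)(R',S')}\from\Gamma/R'\to\Gamma/R$ and $\psi_{(R,S)(R',S')}\from\Delta/S'\to\Delta/S$ are continuous maps of finite discrete graphs, and the square
$$\begin{CD}
\Gamma/R' @>>> \Gamma/R \\
@Vf_{S'R'}VV @VVf_{SR}V \\
\Delta/S' @>>> \Delta/S
\end{CD}$$
commutes because $(f\times f)[R']\subseteq S'\subseteq S$. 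Hence $(f_{SR},\phi_{(R,S)(R',S')},\psi_{(R,S)(R',S')})$ is an inverse system of locally bijective maps of finite discrete graphs (respectively, of regular coverings of finite path connected graphs, in the regular case).

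Next I would verify that the canonical projections assemble into $f$. By the fundamental system property, the family $\{R\mid(R,S)\in I\text{ for some }S\}$ is cofinal among all compatible cofinite entourages of $\Gamma$ (take $W=\Delta\times\Delta$ and an arbitrary $V$ in the definition), and similarly for $\Delta$; so $\varprojlim_{(R,S)\in I}\Gamma/R=\Gamma$ and $\varprojlim_{(R,S)\in I}\Delta/S=\Delta$. The quotient maps $\Gamma\to\Gamma/R$ and $\Delta\to\Delta/S$ are compatible with $f$ via $f_{SR}$, and the explicit construction of inverse limits of cofinite-graph maps discussed just after Definition~\ref{d:Covering} (together with the universal property) identifies $f$ with $\varprojlim f_{SR}$. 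Definition~\ref{d:Covering} (respectively, Definition~\ref{d:Regular covering}) then yields the conclusion.

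The only delicate point is checking that the $R$-component of the fundamental system of good pairs is actually cofinal in the fundamental system of compatible cofinite entourages of $\Gamma$; but this is immediate from how a fundamental system of good pairs is defined, applied with $W$ arbitrary (say $W=\Delta\times\Delta$) and $V$ ranging over all compatible cofinite entourages of $\Gamma$. Everything else — commutativity of the bonding squares, compatibility with $f$, finiteness and discreteness of the quotients, and local bijectivity (or regularity) of each $f_{SR}$ — is built into the definition of a (regular) good pair.
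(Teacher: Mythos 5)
Your proposal is correct and follows essentially the same route as the paper: index the inverse system by the fundamental system of good pairs ordered by reverse inclusion, take the induced locally bijective (resp.\ regular covering) maps $f_{SR}\from\Gamma/R\to\Delta/S$ with the natural quotient bonding maps, use cofinality to identify $\varprojlim\Gamma/R$ with $\Gamma$ and $\varprojlim\Delta/S$ with $\Delta$, and conclude $f=\varprojlim f_{SR}$ is a (regular) covering by Definition~\ref{d:Covering} (resp.\ Definition~\ref{d:Regular covering}). You spell out the directedness, commutativity, and cofinality checks in more detail than the paper does, but the argument is the same.
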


\begin{proof}
The fundamental system $I$ is a directed set under the partial ordering given by the opposite of inclusion of pairs.  So, $\Gamma = \varprojlim \Gamma/R$ where $R$  runs through $\{R\mid (R,S)\in I\}$ and $\Delta = \varprojlim \Delta/S$ where $S$ runs through $\{S\mid (R,S)\in I\}$. And for each $(R,S), (R_1,S_1), (R_2,S_2) \in I, (R_2,S_2) \subseteq  (R_1,S_1),$ we have the two commutative diagrams as follows:
$$\begindc{\commdiag}[30]
\obj(0,0)[1]{$\Delta/S_2$}
\obj(6,0)[2]{$\Delta$}
\obj(2,0)[3]{$\Delta/S_1$}
\obj(8,0)[4]{$\Delta/S$}
\obj(0,2)[5]{$\Gamma/R_2$}
\obj(6,2)[6]{$\Gamma$}
\obj(2,2)[7]{$\Gamma/R_1$}
\obj(8,2)[8]{$\Gamma/R$}
\mor{1}{3}{$\psi_{S_1S_2}$}
\mor{2}{4}{$\psi_S$}
\mor{5}{1}{$f_{S_2R_2}$}
\mor{6}{2}{$f$}
\mor{7}{3}{$f_{S_1R_1}$}
\mor{8}{4}{$f_{SR}$}
\mor{5}{7}{$\phi_{R_1R_2}$}
\mor{6}{8}{$\phi_R$}
\enddc$$
where each $f_{SR}: \Gamma/R \rightarrow \Delta/S$ is locally bijective (respectively, a regular covering), and thus $f$ is a covering (resp. regular covering) of profinite graphs.
\end{proof}

\begin{theorem}\label{quo cov}
Let $G$ be a profinite group acting continuously and residually freely on a connected profinite graph $\Gamma$, without edge inversions. Then $\Gamma/G$ is a profinite graph and the orbit mapping $f\from \Gamma\to\Gamma/G$ is a regular covering of profinite graphs. Moreover, $G = \Aut(\Gamma, f)$ and they are isomorphic as profinite groups. 
\end{theorem}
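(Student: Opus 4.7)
The plan is to reduce to Lemma~\ref{good pair} by exhibiting a fundamental system of regular good pairs for the orbit map $f\from\Gamma\to\Gamma/G$, and then separately to identify $G$ with $\Aut(\Gamma,f)$ via the natural map coming from the action. Both halves rest on the observation that a residually free action descends, on each finite quotient $\Gamma/R$, to a free action of a finite group, where classical covering theory for abstract graphs applies directly.

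First I would fix a fundamental system $J$ of $G$-invariant compatible cofinite entourages of $\Gamma$ witnessing residual freeness. For each $R\in J$, the quotient $\Gamma/R$ is a finite path connected graph (using connectedness of $\Gamma$ together with remark~4 of Section~\ref{s:Definition}) and $G/N_R$ is a finite group acting freely and without edge inversions on $\Gamma/R$. By classical covering theory, the orbit map $f_R\from\Gamma/R\to(\Gamma/R)/(G/N_R)$ is a regular covering of finite path connected graphs. Then I would set $S_R=(f\times f)[R]$ and verify that $S_R$ is a compatible cofinite entourage of $\Gamma/G$ with a canonical identification $(\Gamma/G)/S_R\cong (\Gamma/R)/(G/N_R)$ through which $f_{S_RR}$ corresponds to $f_R$. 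This makes $(R,S_R)$ a regular good pair for $f$, and since $J$ is already a fundamental system of entourages of $\Gamma$, the family $\{(R,S_R):R\in J\}$ is a fundamental system of regular good pairs. Lemma~\ref{good pair} then yields at once that $f$ is a regular covering of profinite graphs; along the way $\Gamma/G$ acquires its profinite graph structure as $\varprojlim(\Gamma/R)/(G/N_R)$ with $f=\varprojlim f_R$.

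For the identification $G\cong\Aut(\Gamma,f)$, I would define $\Phi\from G\to\Aut(\Gamma,f)$ by $\Phi(g)(x)=g\cdot x$. Since the $G$-action is by graph automorphisms without edge inversions and preserves orbits, which are precisely the fibers of $f$, each $\Phi(g)$ is a well-defined covering transformation, and continuity of the action makes $\Phi$ a continuous group homomorphism. Surjectivity follows from transitivity of the $G$-action on fibers combined with Lemma~\ref{l:Uniqueness of lifts}: given $\alpha\in\Aut(\Gamma,f)$ and any vertex $x_0\in\Gamma$, pick $g\in G$ with $g\cdot x_0=\alpha(x_0)$; then $\Phi(g)$ and $\alpha$ both lift $f$ and agree at $x_0$, so $\Phi(g)=\alpha$. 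Injectivity reduces to showing $\bigcap_{R\in J} N_R=1$: any $g$ in this intersection acts trivially on every $\Gamma/R$, hence on $\Gamma=\varprojlim\Gamma/R$, so $\Phi(g)=\id$, and conversely the faithfulness built into the residually free hypothesis (each $G/N_R$ acts faithfully on $\Gamma/R$) forces the kernel of the total action to be trivial. A continuous bijective homomorphism between profinite groups is automatically a topological isomorphism, which completes the identification.

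The hardest part will be the good pair construction, specifically verifying that the bonding maps $\phi_{R'R}\from\Gamma/R'\to\Gamma/R$ (for $R'\subseteq R$ in $J$) are equivariant with respect to the natural surjection $G/N_{R'}\onto G/N_R$, so that they descend to well-defined continuous maps of the orbit quotients $(\Gamma/R')/(G/N_{R'})\to(\Gamma/R)/(G/N_R)$, and that the resulting inverse system has inverse limit exactly $\Gamma/G$. Once this coherence between the group-level and graph-level quotients is in place, the universal property of inverse limits together with the identification $S_R=(f\times f)[R]$ makes the remainder of the argument formal.
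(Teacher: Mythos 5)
Your proposal is correct and follows essentially the same route as the paper: both reduce to the free actions of the finite groups $G/N_R$ on the finite quotients $\Gamma/R$, identify $(\Gamma/G)/\bigl((f\times f)[R]\bigr)$ with $(\Gamma/R)/(G/N_R)$ (the paper's Claim~(2)), and assemble the resulting finite regular coverings into an inverse limit --- you package this via Lemma~\ref{good pair}, while the paper verifies the inverse-system claims directly, which amounts to the same computation. Your identification of $G$ with $\Aut(\Gamma,f)$ via transitivity on fibers, Lemma~\ref{l:Uniqueness of lifts}, and the compact--Hausdorff argument is likewise the paper's argument.
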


\begin{proof}
Since $G$ is acting residually freely over the profinite graph $\Gamma$, by Definition~\ref{res free def}, there exists a fundamental system $I$ of $G$-invarient compatible cofinite entourages $R$ over $\Gamma$. Consider the natural orbit map of graphs $f:\Gamma \rightarrow \Gamma/G$ and the following diagram 
$$\begindc{\commdiag}[30]
\obj(0,0)[1]{$\Gamma/G$}
\obj(2,0)[2]{$(\Gamma/R)/G$}
\obj(0,2)[3]{$\Gamma$}
\obj(2,2)[4]{$\Gamma/R$}
\mor{1}{2}{$\zeta_R$}
\mor{3}{1}{$f$}
\mor{4}{2}{$f_R$}
\mor{3}{4}{$\phi_R$}
\enddc$$
where $f_{R}:\Gamma/R \rightarrow (\Gamma/R)/G$ is the quotient map of graphs corresponding to the group action of $G$ over $\Gamma/R$ defined by $g.R[x] = R[gx]$. Also $\phi_R$ is the natural quotient map from $\Gamma$ to $\Gamma/R$, and
$\zeta_{R}:\Gamma/G \rightarrow (\Gamma/R)/G$ is defined by $\zeta_{R}([x]) = f_{R}\phi_{R}(x),$ for all $[x]\in \Gamma/G$, where $[x]$ is the orbit of $x \in \Gamma$ in the quotient graph $\Gamma/G$. Also then the above diagram is a commutative diagram of continuous maps of graphs, where $\zeta_R$ is continuous since $f$ is a quotient map and $\zeta_{R}f = f_R\phi_R$ is a continuous map of graphs. Write $T_{R} = \zeta_{R}^{-1}\zeta_{R}$. 
and  $K =f^{-1}f$. Let's first discuss some important facts. 

\begin{enumerate}
\item[Claim (1)] $RK = KR$, for all $R \in I$.

The proof follows since each $R \in I$ is $G$-invarient.

\item[Claim (2)] $(f\times f)[R] = T_{R}$ for all $R \in I$.

\item[Claim (3)] $K$ is closed in $\Gamma \times \Gamma$

Proof: Consider the map $h\from G\times \Gamma \to \Gamma \times \Gamma$ defined by $h(g,x) = (x,gx)$. This is a continuous map from a profinite space  to profinite space. So, $K =$ Image of $h$ and hence $K$ is closed in $\Gamma \times \Gamma$.

\item[Claim(4)] $\Gamma/G = \varprojlim (\Gamma/R)/G$ where $R$ runs through $I$, the fundamental system of $G$ invariant compatible cofinite entourages over $\Gamma$. 

The proof follows by using the previous claims and since, for all $R\in I, G/N_R$ acts freely on $\Gamma/R$ with quotient graph $(\Gamma/R)/G$, the orbit map 
$f_R\from\Gamma/R \to (\Gamma/R)/G$ is a regular covering of finite graphs.

So, combining Claim (1) through Claim (4), we deduce that $\Gamma/G$ is a profinite graph and $f^{\prime} = (\varprojlim_{R\in I}f_R)\from \Gamma\to \Gamma/G$ is a regular covering. 

It turns out that $f^{\prime} = f$ and hence orbit mapping $f\from\Gamma\to\Gamma/G$ is a regular covering of profinite graphs.

Clearly, $G \subseteq \Aut(\Gamma,f)$ and by uniqueness of lifts $\Aut(\Gamma,f) \subseteq G$. Now note that the group action of $G$ over $\Gamma$ is uniformly continuous. So, since $\{N_R \mid R\in I\}$ form a fundamental system of entourages of $\Aut(\Gamma,f)$, it follows that the identity map from $G$ to $\Aut(\Gamma,f)$ is uniformly continuous and hence a homeomorphism as $G$ and $\Aut(\Gamma,f)$ are compact Hausdorff spaces. Hence, the result follows.

\end{enumerate}
\end{proof}

We next observe that, the converse of the lemma~\ref{good pair} is true for regular coverings of profinite graphs; we do not know whether or not the converse holds for non-regular coverings.
Hence, regular coverings can be characterized as follows.

\begin{theorem}\label{nec suf reg cov}
A map of profinite graphs $f\from\Gamma\to\Delta$ is a regular covering if and only if there exists a fundamental system of regular good pairs for $f$.
\end{theorem}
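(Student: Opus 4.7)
The forward direction is immediate: if a fundamental system of regular good pairs exists for $f$, then $f$ is a regular covering of profinite graphs by Lemma~\ref{good pair} (applied to regular good pairs). So the real content is the reverse direction.

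For the reverse direction, assume $f\colon\Gamma\to\Delta$ is a regular covering, realized as an inverse limit $f=\varprojlim f_i$ of an inverse system $(f_i\colon\Gamma_i\to\Delta_i,\phi_{ij},\psi_{ij})_{i\in I}$ of regular coverings of finite path connected graphs. My plan is to exhibit the obvious candidate family and check it works. For each $i\in I$, set $R_i=\phi_i^{-1}\phi_i$ and $S_i=\psi_i^{-1}\psi_i$, the kernels of the canonical projections $\phi_i\colon\Gamma\to\Gamma_i$ and $\psi_i\colon\Delta\to\Delta_i$. By Remark~2 in Section~\ref{s:Definition}, $\{R_i\}$ and $\{S_i\}$ already form fundamental systems of compatible cofinite entourages of $\Gamma$ and $\Delta$. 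Invoking Remarks~3 and~4 of that section we may further assume the canonical projections are surjective and that $\Gamma_i=\Gamma/R_i$ and $\Delta_i=\Delta/S_i$ are path connected, so the induced map $f_{S_iR_i}\colon\Gamma/R_i\to\Delta/S_i$ coincides with $f_i$; in particular, it is a regular covering of finite path connected graphs. The commutativity relation $\psi_i f=f_i\phi_i$ gives the inclusion $(f\times f)[R_i]\subseteq S_i$ at once, since $\phi_i(x)=\phi_i(y)$ forces $\psi_i f(x)=\psi_i f(y)$. Hence each $(R_i,S_i)$ is a regular good pair for $f$.

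Finally I would verify that $\{(R_i,S_i)\mid i\in I\}$ is a fundamental system. Given entourages $V$ of $\Gamma$ and $W$ of $\Delta$, use that $\{R_i\}$ and $\{S_i\}$ are fundamental to pick $i_1,i_2\in I$ with $R_{i_1}\subseteq V$ and $S_{i_2}\subseteq W$, and then use directedness of $I$ to choose $i\ge i_1,i_2$. The factorizations $\phi_{i_1}=\phi_{i_1 i}\phi_i$ and $\psi_{i_2}=\psi_{i_2 i}\psi_i$ immediately yield $R_i\subseteq R_{i_1}\subseteq V$ and $S_i\subseteq S_{i_2}\subseteq W$, so $(R_i,S_i)\subseteq(V,W)$.

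I do not anticipate a genuine obstacle; the argument is essentially unpacking the definition of a regular covering. The only delicate point is the bookkeeping invoked from Remarks~3 and~4, which legitimately identifies $\Gamma_i$ with $\Gamma/R_i$ and $\Delta_i$ with $\Delta/S_i$ so that $f_i$ may be replaced without loss of generality by the induced map $f_{S_iR_i}$. Once this identification is in place, the regularity of the good pair follows by definition, and the fundamental-system property reduces to the fact that both $\{R_i\}$ and $\{S_i\}$ are cofinal for their respective uniformities and indexed by the same directed set.
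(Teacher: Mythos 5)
Your handling of the implication ``fundamental system of regular good pairs $\Rightarrow$ regular covering'' via Lemma~\ref{good pair} agrees with the paper. The gap is in the other implication, at the step where you assert that $f_{S_iR_i}\from\Gamma/R_i\to\Delta/S_i$ ``coincides with $f_i$.'' Remark~3 only lets you assume the projections $\psi_i\from\Delta\to\Delta_i$ are surjective, replacing $\Gamma_i$ by $f_i^{-1}(\Delta_i)$, and Remark~4 replaces $\Gamma_i$ by a path component; neither gives surjectivity of $\phi_i\from\Gamma\to\Gamma_i$, and indeed the paper is careful to write only $\phi_i(\Gamma)\subseteq\Gamma_i'\subseteq\Gamma_i$. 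So $\Gamma/R_i=\phi_i(\Gamma)$ can be a proper subgraph of $\Gamma_i$, and then $f_{S_iR_i}$ is the restriction of $f_i$ to that subgraph, which need not be locally bijective. Concretely, take $I=\{i,j\}$ with $j>i$: let $\Delta_j=\Gamma_j$ be the segment $p_0\to p_1\to p_2$ with $f_j=\id$ (a degree-one, hence regular, covering); let $\Delta_i$ be a single loop $e$ at a vertex $z$ and $f_i\from\Gamma_i\to\Delta_i$ the $3$-cycle $u_0\to u_1\to u_2\to u_0$ covering it (regular, with deck group $\mathbb{Z}/3$); let $\psi_{ij}$ collapse the segment onto the loop and let $\phi_{ij}$ send $p_k\mapsto u_k$ and the two edges to $a_0,a_1$. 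All squares commute, both $f_i$ and $f_j$ are regular coverings of path connected finite graphs, and $\psi_{ij}$ is surjective, yet $\phi_i=\phi_{ij}$ misses the edge from $u_2$ to $u_0$. Here $f=\id$ is certainly a regular covering, but $(R_i,S_i)$ is not even a good pair: $\Gamma/R_i$ is the segment, $\Delta/S_i$ is the loop, and the induced map fails local surjectivity at $p_0$. So the family $\{(R_i,S_i)\}$ you propose can contain pairs that are not good, and nothing in your argument repairs this.

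The paper takes an entirely different route precisely to avoid this issue: it invokes Lemmas~\ref{uniform equicont} and~\ref{resi free act} to show that $G=\Aut(\Gamma,f)$ acts uniformly equicontinuously and residually freely on $\Gamma$, identifies $\Delta$ with $\Gamma/G$ using Theorem~\ref{quo cov}, and takes as its fundamental system the pairs $(R,T_R)$ with $R$ a $G$-invariant compatible cofinite entourage and $T_R=(f\times f)[R]$; the induced maps $\Gamma/R\to(\Gamma/R)/G$ are orbit maps of free actions of the finite groups $G/N_R$, hence automatically surjective regular coverings of finite path connected graphs. To salvage your more direct approach you would need to first pass to a cofinal subsystem in which the $\phi_{ij}$ (hence the $\phi_i$) are surjective while the $f_i$ remain regular coverings --- and establishing that is essentially what the paper's detour through $\Aut(\Gamma,f)$ accomplishes.
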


\begin{proof}
Since $f$ is a covering map of profinite graphs with $\Gamma$ profinitely connected, by Lemma~\ref{uniform equicont},  $G = \Aut(\Gamma,f)$ acts over $\Gamma$ uniformly equicontinuously and hence continuously. By Lemma~\ref{resi free act}, $G$ acts residually freely over $\Gamma$. So, by Theorem~\ref{quo cov}, the orbit map $q:\Gamma \rightarrow \Gamma/G$ is a regular covering of profinite graphs and each orbit map $f_R:\Gamma/R \rightarrow (\Gamma/R)/G$, induced by the action of $G$ over $\Gamma/R$, is a regular covering map of finite connected graphs. We show that there is an isomorphism of topological graphs between $\Gamma/G$  and $\Delta$. Define $\theta:\Gamma/G \rightarrow \Delta$ via $\theta([x]) = f(x)$ where $[x]$ is the orbit of $x$ under the orbit map $q:\Gamma \rightarrow \Gamma/G$. Then $\theta$ is a continuous, bijective map of graphs from a compact topological graph $\Gamma/G$ to a Hausdorff topological graph $\Delta,$ and so $\theta$ is an isomorphism of topological graphs. Now let $R_i = \phi_{i}^{-1}\phi_{i}, S_{j} = \psi_{j}^{-1}\psi_{j}$ be any two compatible cofinite entourages over $\Gamma$ and $\Delta$ respectively, where $R_i$ is $G$-invariant. Consider $S_k = \psi_{k}^{-1}\psi_{k}$ and suppose $S_k \subseteq S_i, S_j$. Suppose  $\phi_k^{-1}\phi_k = R_k$ is the $G$-invariant compatible cofinite entourage over $\Gamma$ with $R_k \subseteq R_i, R_j$. Also $(f\times f)(R_k) = T_{R_k}$, as in the Claim (2) in the proof of Theorem~\ref{quo cov} and $(f\times f)(R_k) \subseteq S_{k}$. This implies that $T_{R_k} \subseteq S_k$. So we have $(R_k, T_{R_{k}}) \subseteq (R_i,S_j)$, where $f_{R_k}\from\Gamma/R_k \to (\Gamma/R_k)/G$ is a regular covering map finite graphs. As in the proof of Theorem~\ref{quo cov} $(R,T_R)$ is a regular good pair for each $G$-invariant compatible cofinite entourage $R$ over $\Gamma$.  Hence it follows that if $f\from\Gamma\to\Delta$ is a regular covering of profinite graphs then there exists a fundamental system of regular good pairs for $f$. By considering Lemma~\ref{good pair} we obtain the converse part and thus the theorem.
\end{proof}

\begin{corollary}
 If $f\from \Gamma \to \Delta$ is a regular covering of profinite graphs then it is a uniform quotient map.
\end{corollary}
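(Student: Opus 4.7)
The plan is to deduce this corollary directly from Theorem~\ref{nec suf reg cov}, where the existence of a fundamental system of regular good pairs for a regular covering has been established. A uniform quotient map is a surjective uniformly continuous map $f\from\Gamma\to\Delta$ whose target carries the uniformity coinduced by $f$; equivalently, a fundamental system of entourages of $\Delta$ is given by $\{(f\times f)[R]\mid R \text{ a compatible cofinite entourage of } \Gamma\}$. So the task reduces to verifying (i) surjectivity, (ii) uniform continuity, and (iii) that the images $(f\times f)[R]$ form a fundamental system of entourages for $\Delta$.

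For (i), I would note that a regular covering is, by Definition~\ref{d:Regular covering}, the inverse limit of a system whose base graphs $\Delta_i$ are path connected; hence $\Delta$ is profinitely connected, and Lemma~\ref{elem prop}(b) gives surjectivity. For (ii), $f$ is a continuous map between compact Hausdorff uniform spaces and is therefore automatically uniformly continuous.

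The heart of the argument is (iii). By Theorem~\ref{nec suf reg cov}, there exists a fundamental system $\mathcal{I}$ of regular good pairs for $f$, and inspection of its proof together with Claim~(2) in the proof of Theorem~\ref{quo cov} shows that one can in fact take
$$
\mathcal{I}=\{(R,T_R)\mid R\text{ a $G$-invariant compatible cofinite entourage of }\Gamma\},
$$
where $G=\Aut(\Gamma,f)$ and $T_R=(f\times f)[R]$ is itself a compatible cofinite entourage of $\Delta$. Given any compatible cofinite entourage $S$ of $\Delta$, the fundamental system property yields some $(R',S')\in\mathcal{I}$ with $S'\subseteq S$, whence
$$
(f\times f)[R']\;\subseteq\;S'\;\subseteq\;S.
$$
Thus the images $(f\times f)[R]$, as $R$ runs through the $G$-invariant compatible cofinite entourages of $\Gamma$, form a fundamental system of entourages of $\Delta$, which is exactly the statement that $f$ is a uniform quotient map.

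The only real obstacle is bookkeeping: one must confirm that the images $(f\times f)[R]$ are genuine entourages (equivalence relations of finite index with finitely many edge-orbits) rather than merely reflexive symmetric relations, and this is precisely the content of Claim~(2) in the proof of Theorem~\ref{quo cov}. Once that identification is in hand, the corollary follows without further computation.
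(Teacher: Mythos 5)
Your proof is correct and follows the route the paper intends: the corollary is stated immediately after Theorem~\ref{nec suf reg cov} precisely because the regular good pairs $(R,T_R)$ with $T_R=(f\times f)[R]$ (Claim~(2) of Theorem~\ref{quo cov}) exhibit the uniformity of $\Delta\cong\Gamma/G$ as the one coinduced by $f$, with surjectivity and uniform continuity coming from connectedness and compactness exactly as you say. The paper omits the argument entirely, so your write-up supplies the missing details along essentially the same lines.
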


\begin{lemma}
Let $f\from\Gamma \to \Delta$ be a regular covering map of profinite connected graphs. Then for any closed subgroup $H$ of $G = Aut(\Gamma,f)$, we have the 
commutative diagram

$$\begindc{\commdiag}[30]
\obj(0,0)[1]{$\Gamma/H$}
\obj(3,0)[2]{$\Gamma/G\cong \Gamma/\!/G$}
\obj(3,3)[3]{$\Gamma$}
\mor{1}{2}{$f_H$}
\mor{3}{1}{$h$}
\mor{3}{2}{$f$}
\enddc$$
 where $f_H, h$ are a covering map and a regular covering map of profinite connected graphs respectively. Also $f_H$ is a regular covering if $H$ is a normal subgroup of $G$. 
\end{lemma}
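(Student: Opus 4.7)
The plan is to apply Theorem~\ref{quo cov} twice: first to the $H$-action on $\Gamma$ to obtain $h\from\Gamma\to\Gamma/H$ as a regular covering, and, when $H$ is normal, to the induced $(G/H)$-action on $\Gamma/H$ to obtain $f_H$ as a regular covering; the non-normal case of $f_H$ being merely a covering will be handled by Lemma~\ref{good pair}.

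First I would verify that the closed subgroup $H$ inherits the hypotheses of Theorem~\ref{quo cov}. Being closed in the profinite group $G$, the subgroup $H$ is itself profinite, and its restricted action on $\Gamma$ is continuous and without edge inversions. For residual freeness, fix the fundamental system $\mathcal I$ of $G$-invariant compatible cofinite entourages $R$ of $\Gamma$ supplied by Lemma~\ref{resi free act}, so that $G/N_R$ acts freely on the finite graph $\Gamma/R$. Each such $R$ is a fortiori $H$-invariant, and the kernel of the induced $H$-action on $\Gamma/R$ is $N_R\cap H$; the injection $H/(N_R\cap H)\hookrightarrow G/N_R$ therefore inherits the free action. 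Hence $H$ acts residually freely on $\Gamma$, and Theorem~\ref{quo cov} identifies $h\from\Gamma\to\Gamma/H$ as a regular covering of profinite graphs; connectedness of $\Gamma/H$ follows from that of $\Gamma$ since $h$ is a quotient map.

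Next I would define $f_H\from\Gamma/H\to\Gamma/G$ by $f_H([x]_H)=[x]_G$; it is continuous because $h$ is a quotient map and $f=f_H\circ h$. To show $f_H$ is a covering I would exhibit a fundamental system of good pairs. For each $R\in\mathcal I$ let $R_H=(h\times h)[R]\subseteq(\Gamma/H)^2$ and $R_G=(f\times f)[R]\subseteq(\Gamma/G)^2$. Using the $G$-invariance of $R$, one checks that both are compatible cofinite equivalence relations, and that there are canonical identifications $(\Gamma/H)/R_H\cong(\Gamma/R)/(H/(N_R\cap H))$ and $(\Gamma/G)/R_G\cong(\Gamma/R)/(G/N_R)$. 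Under these, the induced map $(f_H)_{R_G R_H}$ becomes the natural orbit map between two finite graphs, each obtained by quotienting the common finite graph $\Gamma/R$ by a free action of a group and its subgroup; classical finite-graph theory gives that such a map is locally bijective. Hence each $(R_H,R_G)$ is a good pair for $f_H$, these pairs are cofinal as $R$ ranges over $\mathcal I$, and Lemma~\ref{good pair} identifies $f_H$ as a covering.

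When $H\trianglelefteq G$, the quotient group $G/H$ acts on $\Gamma/H$ with orbit map $f_H$, continuously and without edge inversions. The relations $R_H$ above form a fundamental system of $(G/H)$-invariant compatible cofinite entourages on $\Gamma/H$, and the induced action of $(G/H)/((H\cdot N_R)/H)\cong G/(H\cdot N_R)$ on $(\Gamma/H)/R_H$ is free because $G/N_R$ acts freely on $\Gamma/R$. Thus $G/H$ acts residually freely on $\Gamma/H$, and a second application of Theorem~\ref{quo cov} identifies $f_H$ as a regular covering. The main obstacle throughout is the bookkeeping in the good-pair step: verifying that the pushforwards $R_H$ and $R_G$ really are compatible cofinite entourages and that the finite quotients identify as claimed. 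Once those identifications are in place, local bijectivity of $(f_H)_{R_G R_H}$ and residual freeness of $G/H$ in the normal case reduce to standard statements about free actions of finite groups on finite graphs.
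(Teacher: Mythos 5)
Your proof is correct and follows essentially the same route as the paper: both reduce to the finite quotients $\Gamma/R$ for $G$-invariant (hence $H$-invariant) compatible cofinite entourages $R$, factor the finite regular covering $f_R$ through the orbit map $h_R\from\Gamma/R\to(\Gamma/R)/H$, and pass to the inverse limit. The only difference is packaging: you invoke Theorem~\ref{quo cov} and Lemma~\ref{good pair} as black boxes (and, in the normal case, apply Theorem~\ref{quo cov} a second time to the induced $G/H$-action on $\Gamma/H$), whereas the paper runs the inverse-limit construction by hand and argues directly at the finite level that each $f_{HR}$ is a regular covering.
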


\begin{proof}
 Each $G$-invariant compatible cofinite entourage $R$ over $\Gamma$ is also an $H$-invariant compatible cofinite entourage. Thus, as in Theorem~\ref{quo cov}, each $h_{R}\from \Gamma/R \to (\Gamma/R)/H$, the natural quotient map is a regular covering of finite path connected graphs. It follows that $h = \varprojlim_{R\in I}h_R$ is a regular covering  of profinite connected graphs.

As a consequence, we have a new commutative diagram of maps of finite path connected graphs, namely 
$$\begindc{\commdiag}[30]
\obj(3,3)[1]{$\Gamma/R$}
\obj(0,0)[2]{$(\Gamma/R)/H$}
\obj(6,0)[3]{$(\Gamma/R)/G$}
\mor{1}{2}{$h_R$}
\mor{2}{3}{$f_{HR}$}
\mor{1}{3}{$f_R$}
\enddc$$
 We have 

$$\begindc{\commdiag}[30]
\obj(0,-1)[1]{$\Gamma/H$}
\obj(6,4)[2]{$\Gamma$}
\obj(6,8)[3]{$\Gamma/R$}
\obj(3,0)[4]{$(\Gamma/R)/H$}
\obj(9,0)[5]{$(\Gamma/R)/G$}
\obj(12,-1)[6]{$\Gamma/G$}
\mor{2}{1}{$h$}
\mor{2}{6}{$f$}
\mor{1}{4}{$\xi_R$}
\mor{6}{5}{$\zeta_R$}
\mor{4}{5}{$f_{HR}$}
\mor{3}{5}{$f_R$}
\mor{3}{4}{$h_R$}
\mor{2}{3}{$\phi_R$}
\mor{1}{6}{$f_H$}
\enddc$$

Thus, for all $\delta\in\Gamma$, if $\delta = (\delta_R)_{R\in I}$, then $f_H(h(\delta)) = f_H(h((\delta_R)_{R\in I})) = f_H((h_R(\delta_R))_{R\in I}) = (f_{HR}((h_R(\delta_R)))_{R\in I} = (f_R(\delta_R))_{R\in I} = f((\delta_R)_{R\in I}) = f(\delta)$.

Now, let $H$ be a normal subgroup of $G$. In order to show $f_H$ is a regular covering we wish to show that each $f_{HR}$ is a regular covering of finite path connected graphs. This follows since $G$ acts on $(\Gamma/R)/H$ via covering transformation.

\end{proof}


\section{Profinite fundamental groups and the Universal coverings}

The goal of the next subsection is to construct a universal covering of any connected profinite graph $\Delta$.

We define a profinite analogue of the fundamental group of an abstract graph. Just as we have defined most notions here, we define this first in the special case of a finite discrete graph and then extend to an arbitrary profinite graph by taking the projective limit of the profinite fundamental groups of its finite discrete quotient graphs.

\begin{definition} \rm
Let $\Delta$ be a profinite graph and let $b$, be a vertex of $\Delta$. The {\it profinite fundamental group\/} $\widehat\pi_1(\Delta,b)$ of $\Delta$ based at $b$ is defined as follows:
\begin{enumerate}
\item[(a)] If $\Delta$ is finite, then $\widehat\pi_1(\Delta,b)$ is the profinite completion of the ordinary fundamental group $\pi_1(\Delta,b)$ with respect to the collection of all cofinite congruences over $\pi_1(\Delta,b)$. 
\item[(b)] In the general case, let $J$ be a fundamental system of compatible cofinite entourages of $\Delta$ and define $\widehat\pi_1(\Delta,b)=\varprojlim\widehat\pi_1(\Delta/S,S[b])$, where $S$ runs through~$J$.
\end{enumerate}
\end{definition}

\begin{remarks}
The structure, defined above is well defined by ~\cite{bH77}. 
\end{remarks}

Next we define homomorphisms on profinite fundamental groups induced by continuous map of profinite graphs.
\begin{definition} \label {Induced homomorphism}
\rm
 Let $f\from\Gamma \to \Delta$ be a continuous map of profinite graphs. First consider the case when $\Gamma$ and $\Delta$ are finite. Let $f$ denote the induced homomorphism on ordinary fundamental groups $f\from \pi_1(\Gamma,a) \to \  \pi_1(\Delta,f(a))$ for any vertex $a$ in $\Gamma$. Then, $f$ is continuous in the cofinite topologies in which every normal subgroup of finite index is open. Hence $f$ determines a continuous homomorphism of the profinite completions $f^{*}\from\widehat {\pi_1}(\Gamma,a) \to \  \widehat{\pi_1}(\Delta,f(a))$. Now we consider the general case where $\Gamma$ and $\Delta$ are any profinite graphs.
For any pair of compatible cofinite entourages $R$ and $S$ over $\Gamma$ and $\Delta$ respectively, let us call $(R,S)$  a {\it half good pair\/} relative to  $f$ if and only if 
$(f\times f)[R] \subseteq S$, that is if and only if there is well-defined map of finite graphs $f_{SR}\from \Gamma/R \to \Delta/S$ determined by $f$. Then, $J = \{(R,S)\mid (R,S)$ is a half good pair relative to $f\}$, endowed with reverse inclusion, i.e. $(R,S)\geq (T,Q)$ if and only if $R\subseteq T$ and $S\subseteq Q$, forms a directed set. Then $f = \varprojlim f_{SR}, ((R,S) \in J)$. So, for $(R,S), (T,Q) \in J$ with $T\subseteq R, Q\subseteq S$ we have the commutative diagram
$$\begindc{\commdiag}[30]
\obj(0,0)[1]{$\Delta/Q$}
\obj(4,0)[2]{$\Delta/S$}
\obj(0,4)[3]{$\Gamma/T$}
\obj(4,4)[4]{$\Gamma/R$}
\mor{1}{2}{$\psi_{SQ}$}
\mor{3}{1}{$f_{QT}$}
\mor{4}{2}{$f_{SR}$}
\mor{3}{4}{$\phi_{RT}$}
\enddc$$
where $\phi_{RT}, \psi_{SQ}$ are map of the quotient graphs defined in the natural way. So, the above commutative diagram leads to another commutative diagram of continuous homomorphism of profinite groups.
$$\begindc{\commdiag}[30]
\obj(0,0)[1]{$\widehat\pi_1(\Delta/Q,Q[f(a)])$}
\obj(4,0)[2]{$\widehat\pi_1(\Delta/S,S[f(a)])$}
\obj(0,4)[3]{$\widehat\pi_1(\Gamma/T,T[a]))$}
\obj(4,4)[4]{$\widehat\pi_1(\Gamma/R,R[a])$}
\mor{1}{2}{$\psi^{*}_{SQ}$}
\mor{3}{1}{${f^{*}_{QT}}$}
\mor{4}{2}{${f^{*}_{SR}}$}
\mor{3}{4}{$\phi^{*}_{RT}$}
\enddc$$

So, now we can define $f^{*} = \varprojlim f^{*}_{SR} \from \widehat {\pi_1}(\Gamma,a) \to \  \widehat{\pi_1}(\Delta,f(a))$

\end{definition}

\begin{lemma}
For any two continuous maps of profinite graphs $f$ from $(\Gamma,a)$ to $(\Delta,b)$ and $g$ from $(\Delta,b)$ to $(\Sigma,c)$ for any vertex $a \in V(\Gamma)$, $f(a) = b \in V(\Delta), g(b) = c \in V(\Sigma)$, consider the corresponding induced maps 
$f^{*}\from \widehat{\pi_1}(\Gamma,a)\to \widehat\pi_1(\Delta,b)$, $g^{*}\from \widehat{\pi_1}(\Delta,b)\to \widehat{\pi_1}(\Sigma,c)$  and the induced map
$(gf)^{*}\from \widehat{\pi_1}(\Gamma,a)\to \widehat{\pi_1}(\Sigma,c)$. Then $(gf)^{*} = g^{*}f^{*}$. 
\end{lemma}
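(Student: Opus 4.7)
The plan is to reduce the statement to the classical functoriality of the fundamental group on finite graphs and then assemble via the inverse-limit construction used in Definition~\ref{Induced homomorphism}. Since $f^{*}$, $g^{*}$, and $(gf)^{*}$ are each built from the maps $f_{SR}$, $g_{QS}$, $(gf)_{QR}$ of finite discrete quotient graphs, the strategy is: first verify the statement when $\Gamma$, $\Delta$, $\Sigma$ are finite; next show that the map-level relation $(gf)_{QR} = g_{QS}\circ f_{SR}$ holds whenever the relevant half good pairs are compatibly chosen; and finally invoke universality of the inverse limit.

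In the finite case, $f^{*}$ is the profinite completion of the ordinary induced homomorphism $f_{\ast}\from\pi_1(\Gamma,a)\to\pi_1(\Delta,b)$. Classical graph theory gives $(gf)_{\ast} = g_{\ast}f_{\ast}$, and since profinite completion of abstract groups endowed with their cofinite topologies is a functor on continuous homomorphisms, this identity passes through to $(gf)^{*} = g^{*}f^{*}$. For the general profinite situation I would take a half good pair $(R,S)$ for $f$ and a half good pair $(S,Q)$ for $g$ and observe
\[
(gf\times gf)[R] = (g\times g)\bigl[(f\times f)[R]\bigr]\subseteq (g\times g)[S]\subseteq Q,
\]
so that $(R,Q)$ is automatically a half good pair for $gf$. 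Directly from the definitions the induced maps of finite quotient graphs satisfy $(gf)_{QR} = g_{QS}\circ f_{SR}$, and applying the already established finite case to these quotient graphs yields $(gf)^{*}_{QR} = g^{*}_{QS}\circ f^{*}_{SR}$.

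To conclude I would verify that pairs $(R,Q)$ arising as such compositions of half good pairs are cofinal among all half good pairs for $gf$: given any half good pair $(R_0,Q_0)$ for $gf$, continuity of $g$ provides a compatible cofinite entourage $S$ of $\Delta$ with $(g\times g)[S]\subseteq Q_0$, and continuity of $f$ then provides a compatible cofinite entourage $R\subseteq R_0$ of $\Gamma$ with $(f\times f)[R]\subseteq S$. Thus the triples $(R,S,Q_0)$ are cofinal in the natural directed system indexing the three inverse limits simultaneously, and universality of the inverse limit glues the finite-level identities $(gf)^{*}_{QR} = g^{*}_{QS}\circ f^{*}_{SR}$ into the desired global equality of continuous homomorphisms $\widehat\pi_1(\Gamma,a)\to\widehat\pi_1(\Sigma,c)$. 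The main obstacle here is precisely this cofinality/indexing bookkeeping, since $f^{*}$, $g^{*}$, and $(gf)^{*}$ are defined as limits over three distinct (though naturally linked) directed systems of half good pairs; once the cofinal refinement above is produced, the rest of the argument is formal.
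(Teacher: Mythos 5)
Your proof is correct. The paper states this lemma with no proof at all, so there is no authors' argument to compare against; what you have written is precisely the verification that the paper's own construction of $f^{*}=\varprojlim f^{*}_{SR}$ over half good pairs calls for. Your three steps --- the finite case via classical functoriality of $\pi_1$ plus functoriality of profinite completion, the observation that half good pairs compose with $(gf)_{QR}=g_{QS}\circ f_{SR}$, and the cofinality of the composed triples $(R,S,Q)$ among half good pairs for $gf$ --- handle exactly the bookkeeping you identify as the main obstacle, and that cofinality step does go through because continuous maps of profinite graphs are uniformly continuous and the compatible cofinite entourages form a fundamental system closed under finite intersection.
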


\begin{lemma}
If $f\from \Gamma \to \Delta$ is an isomorphism of profinite graphs, then for any vertex $a$ in $\Gamma$ the induced homomorphism $f^{*}\from \widehat {\pi_1}(\Gamma,a) \to \  \widehat{\pi_1}(\Delta,f(a))$ is an isomorphism of profinite groups. 
\end{lemma}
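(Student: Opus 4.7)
The plan is to combine the functoriality of $f\mapsto f^{*}$ (the composition lemma just proven) with the observation that the induced map of an identity is again an identity. Given those two ingredients, the result follows formally: if $g\from\Delta\to\Gamma$ denotes the continuous graph inverse of $f$, then $g^{*}$ serves as a two-sided inverse of $f^{*}$, and both are continuous homomorphisms of profinite groups.

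The key preliminary step is to verify that $(\id_\Gamma)^{*} = \id_{\widehat{\pi_1}(\Gamma,a)}$ for every pointed profinite graph $(\Gamma,a)$. Unpacking Definition~\ref{Induced homomorphism}, every diagonal pair $(R,R)$ is a half good pair for $\id_\Gamma$, and such pairs form a cofinal subfamily of the directed set $J$ used in that definition: for any half good pair $(R,S)$ one has $R\subseteq S$, hence $(R,R)\geq(R,S)$ in the reverse-inclusion ordering. On each $(R,R)$, the induced map $(\id_\Gamma)_{RR}$ is literally $\id_{\Gamma/R}$, so its induced map on $\pi_1(\Gamma/R,R[a])$ is the identity, and the profinite completion of an identity is the identity on $\widehat{\pi_1}(\Gamma/R,R[a])$. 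Passing to the inverse limit along this cofinal subfamily yields $(\id_\Gamma)^{*} = \id_{\widehat{\pi_1}(\Gamma,a)}$, and the analogous statement holds for $(\id_\Delta)^{*}$.

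Now, because $f$ is an isomorphism of profinite graphs, there is a continuous map of profinite graphs $g\from\Delta\to\Gamma$ with $gf=\id_\Gamma$ and $fg=\id_\Delta$; in particular $g(f(a))=a$, so the basepoints match up as required by Definition~\ref{Induced homomorphism}. Applying the composition lemma twice gives
$$g^{*}f^{*} = (gf)^{*} = (\id_\Gamma)^{*} = \id_{\widehat{\pi_1}(\Gamma,a)}, \qquad f^{*}g^{*} = (fg)^{*} = (\id_\Delta)^{*} = \id_{\widehat{\pi_1}(\Delta,f(a))}.$$
Thus $f^{*}$ is a bijection with inverse $g^{*}$, and since both $f^{*}$ and $g^{*}$ are continuous homomorphisms of profinite groups by construction (each is an inverse limit of continuous homomorphisms of profinite groups), $f^{*}$ is an isomorphism of profinite groups.

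The main obstacle, minor though it is, lies in the cofinality check in the preliminary step: one has to confirm that inverse-limiting the identities $(\id_\Gamma)^{*}_{RR}$ over the subfamily $\{(R,R)\}$ produces the same map as the inverse limit taken over all of $J$ in Definition~\ref{Induced homomorphism}. Once that cofinality is secured, everything else reduces to formal category-theoretic manipulation and the routine fact that profinite completion sends identity homomorphisms to identity homomorphisms.
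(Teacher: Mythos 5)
The paper states this lemma without giving any proof, evidently regarding it as a formal consequence of the composition lemma that immediately precedes it. Your argument --- checking that $(\id_\Gamma)^{*}=\id$ via cofinality of the diagonal half good pairs $(R,R)$ and then applying $(gf)^{*}=g^{*}f^{*}$ to the continuous inverse $g$ of $f$ --- is correct and supplies exactly the intended deduction.
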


From now we will denote the induced homomorphism $f^{*}\from \widehat {\pi_1}(\Gamma,a) \to \  \widehat{\pi_1}(\Delta,f(a))$ simply by $f\from \widehat {\pi_1}(\Gamma,a) \to \  \widehat{\pi_1}(\Delta,f(a))$
whenever $f\from\Gamma \to \Delta$ is a continuous map of profinite graphs and $a$ is a vertex in $\Gamma$.

\begin{lemma}
If $f\from \Gamma \to \Delta$ is a covering of profinite graphs, then for any vertex $a$ in $\Gamma$ the induced homomorphism $f\from \widehat {\pi_1}(\Gamma,a) \to \  \widehat{\pi_1}(\Delta,f(a))$ is a continuous monomorphism.
\end{lemma}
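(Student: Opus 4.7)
The plan is to reduce to the finite graph case via the inverse limit description of $f$, combining two standard facts: a covering of finite connected graphs embeds fundamental groups as finite-index subgroups, and finite-index subgroup inclusions induce injections on profinite completions.

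First I would fix a representation $f=\varprojlim(f_i\from\Gamma_i\to\Delta_i,\phi_{ij},\psi_{ij})$ with each $f_i$ locally bijective on finite discrete graphs, set $a_i=\phi_i(a)$ and $b_i=\psi_i(f(a))$ (so $f_i(a_i)=b_i$), and, by replacing $\Gamma_i,\Delta_i$ by the path components of $a_i, b_i$ (which does not affect the based fundamental groups), assume each $\Gamma_i, \Delta_i$ is path connected. Then every $f_i$ is a covering of finite path-connected graphs. Writing $R_i=\phi_i^{-1}\phi_i$ and $S_i=\psi_i^{-1}\psi_i$ as in Remark~(2) of Section~\ref{s:Definition}, the commutativity $\psi_i f=f_i\phi_i$ gives $(f\times f)[R_i]\subseteq S_i$, so each $(R_i,S_i)$ is a half good pair relative to $f$. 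Moreover, since $\{R_i\}$ and $\{S_i\}$ are fundamental systems of compatible cofinite entourages, the subsystem $\{(R_i,S_i)\}_{i\in I}$ is cofinal in the directed set $J$ of half good pairs used in the definition of $f^*$. By cofinality of the inverse limit,
\[
f^*=\varprojlim_{i\in I} f_i^*\from \widehat\pi_1(\Gamma,a)\to\widehat\pi_1(\Delta,f(a)),
\]
where $f_i^*\from\widehat\pi_1(\Gamma_i,a_i)\to\widehat\pi_1(\Delta_i,b_i)$ is the continuous homomorphism on profinite completions induced by $f_i$.

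Next I would verify that each $f_i^*$ is injective. Classical covering space theory of finite connected graphs shows that the map $\pi_1(\Gamma_i,a_i)\to\pi_1(\Delta_i,b_i)$ induced on ordinary fundamental groups is injective with image of finite index. The remaining point is the standard profinite fact that for a finite-index subgroup $H\leq G$ of an abstract group, the natural map $\widehat H\to\widehat G$ on profinite completions is a continuous injection. This follows because, given any finite-index normal subgroup $N$ of $H$, the normal core in $G$ of $N\cap K$, where $K$ is the (necessarily finite-index) normal core of $H$ in $G$, is a finite-index normal subgroup of $G$ contained in $N$; hence every cofinite congruence on $H$ is refined by a cofinite congruence induced from $G$. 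Applying this to $H=\pi_1(\Gamma_i,a_i)$ inside $G=\pi_1(\Delta_i,b_i)$ yields injectivity of $f_i^*$.

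Finally, an inverse limit of continuous injective homomorphisms of profinite groups is a continuous injection: continuity comes from the universal property of the inverse limit, and injectivity is immediate coordinatewise. Hence $f^*=\varprojlim f_i^*$ is a continuous monomorphism. I expect the main obstacle to be the second step, since injectivity of a discrete group homomorphism does not in general survive profinite completion; the finite-index property provided by coverings of finite graphs, together with the normal-core argument, is precisely what makes injectivity descend to the completions.
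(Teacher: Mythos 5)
Your proof is correct and supplies precisely the details that the paper omits: the paper's entire proof of this lemma is the single line that the result ``follows by'' the cited Hartley reference, and the content standing behind that citation is exactly your two ingredients --- injectivity with finite-index image for the map on ordinary fundamental groups induced by a covering of finite connected graphs, and the normal-core argument showing that a finite-index inclusion $H\le G$ induces a continuous injection $\widehat H\to\widehat G$ of profinite completions. Your reduction to the finite level via cofinality of the pairs $(R_i,S_i)$ in the directed set of half good pairs, and the observation that an inverse limit of injections is injective coordinatewise, are both handled correctly, so this is essentially the intended argument written out in full.
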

\begin{proof}
 The proof of the result follows by ~\cite{bH77}.
\end{proof}

\begin{lemma}\label{univ fund}
Let $\Gamma$ be a connected profinite graph. Let $a \in V(\Gamma)$. Then, $\widehat \pi_1(\Gamma,a) = 1$ if and only if for each compatible cofinite entourage $R$ over $\Gamma$ and subgroup $H$ of finite index in $\pi_1(\Gamma/R,R[a])$ there exists a compatible cofinite entourage $R^{\prime} \subseteq R$ such that 
$\phi_{RR^{\prime}}(\pi_1(\Gamma/R^{\prime},R^{\prime}[a]))\subseteq H$.
\end{lemma}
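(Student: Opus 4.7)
The plan is to translate both sides of the equivalence into the language of the inverse system $\{\widehat G_R\}_{R \in J}$, where $G_R = \pi_1(\Gamma/R, R[a])$, $\widehat G_R$ denotes its profinite completion, $J$ is a fundamental system of compatible cofinite entourages of $\Gamma$, and $\widehat\phi_{RR'}\from \widehat G_{R'} \to \widehat G_R$ is the continuous extension of $\phi_{RR'}$ (for $R' \subseteq R$ in $J$). By definition $\widehat\pi_1(\Gamma,a) = \varprojlim_{R\in J}\widehat G_R$. The first step is the standard reduction for inverse limits of compact groups: $\widehat\pi_1(\Gamma,a) = 1$ holds if and only if, for each $R \in J$, the image of the projection $\widehat\pi_1(\Gamma,a) \to \widehat G_R$ is trivial, and by compactness this image equals the decreasing intersection $\bigcap_{R' \subseteq R}\widehat\phi_{RR'}(\widehat G_{R'})$ inside $\widehat G_R$.

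Next, I would identify each factor: since $G_{R'}$ is dense in $\widehat G_{R'}$ and $\widehat\phi_{RR'}$ is continuous with compact source, $\widehat\phi_{RR'}(\widehat G_{R'})$ equals the closure $\overline{\phi_{RR'}(G_{R'})}$ taken in $\widehat G_R$. Thus the triviality of $\widehat\pi_1(\Gamma,a)$ becomes the assertion that
$$\bigcap_{R' \subseteq R}\overline{\phi_{RR'}(G_{R'})} \;=\; \{1\} \quad \text{in }\widehat G_R, \quad \text{for every } R \in J.$$
The closures $\overline H$ of the finite-index subgroups $H$ of $G_R$ are precisely the open subgroups of $\widehat G_R$, and they form a fundamental system of open neighborhoods of $1$; moreover $\overline H \cap G_R = H$ because $H$ is already closed in the profinite topology of $G_R$. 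In a compact Hausdorff space a filtered decreasing intersection of closed sets reduces to $\{1\}$ if and only if each open neighborhood of $1$ contains some member of the filter, and the family $\{\widehat\phi_{RR'}(\widehat G_{R'})\}$ is indeed filtered because $J$ is directed. Combining this with the elementary observation $\overline{\phi_{RR'}(G_{R'})} \subseteq \overline H \Leftrightarrow \phi_{RR'}(G_{R'}) \subseteq H$, the intersection-triviality becomes exactly: for each $R \in J$ and each finite-index $H \leq G_R$, some $R' \in J$ with $R' \subseteq R$ satisfies $\phi_{RR'}(G_{R'}) \subseteq H$.

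Finally, I would extend the quantifier over $R \in J$ to all compatible cofinite entourages of $\Gamma$: given such an $R$ and $H$ of finite index in $G_R$, choose $R_0 \in J$ with $R_0 \subseteq R$; then $H_0 := \phi_{RR_0}^{-1}(H)$ has finite index in $G_{R_0}$, and an $R' \subseteq R_0$ furnished by the condition for the pair $(R_0,H_0)$ automatically works for $(R,H)$ via composition $\phi_{RR'} = \phi_{RR_0}\phi_{R_0R'}$. The reverse restriction (from "all $R$" to "$R \in J$") is trivial. The main obstacle is the careful identification $\widehat\phi_{RR'}(\widehat G_{R'}) = \overline{\phi_{RR'}(G_{R'})}$ together with verifying that the filtered-intersection-of-compacts argument delivers the precise combinatorial condition as stated; once this dictionary between subgroups of $G_R$ and closed subgroups of $\widehat G_R$ is in place, both implications fall out simultaneously.
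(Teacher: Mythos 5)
Your proposal is correct and follows essentially the same route as the paper: both identify the image of $\widehat\pi_1(\Gamma,a)$ in $\widehat\pi_1(\Gamma/R,R[a])$ with the filtered intersection $\bigcap_{R'\subseteq R}\phi_{RR'}(\widehat\pi_1(\Gamma/R',R'[a]))$ of closed subgroups, use compactness (the finite intersection property / filtered-intersection argument) to extract a single $R'$ with image inside $\overline H$, and pass between finite-index subgroups $H$ of $\pi_1(\Gamma/R,R[a])$ and their closures $\overline H$ via $\overline H\cap\pi_1(\Gamma/R,R[a])=H$. Your explicit handling of the quantifier over all compatible cofinite entourages versus a fundamental system is a minor point the paper leaves implicit, but the substance is identical.
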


\begin{proof}
Let  us first assume that $\widehat \pi_1(\Gamma,a) = 1$. Then, let $H$ be any subgroup of finite index in $\pi_1(\Gamma/R,R[a])$ for some
compatible cofinite entourage $R$ over $\Gamma$. Without loss of generality we can assume that $H$ is a normal subgroup of finite index. Let the closure of $H$ in $\widehat\pi_1(\Gamma/R,R[a])$ be denoted by $\overline H$.  Also
$$
\bigcap_{R^{\prime\prime}\geq R} [\phi_{RR^{\prime\prime}}(\widehat\pi_1(\Gamma/R^{\prime\prime},R^{\prime\prime}[a]) )]\setminus \overline {H} = \emptyset
.
$$
Now each $\phi_{RR^{\prime\prime}}$ is a continuous map from a compact space to a Hausdorff space. So each $\phi_{RR^{\prime\prime}}(\widehat\pi_1(\Gamma/R^{\prime\prime},R^{\prime\prime}[a]))$ is a closed subset of $\widehat\pi_1(\Gamma/R,R[a])$. Thus by the finite intersubsection  property of compact topological space there exists finite number of compatible cofinite entourages $R_1, R_2, ... R_n$ over $\Gamma$, with $R_i\subseteq R,$ for all $i =1, 2, ..., n$, such that $\bigcap_{i =1}^n[\phi_{RR_i}(\widehat\pi_1(\Gamma/R_i,R_i[a]))] \setminus \overline {H} = \emptyset.$ So, $\bigcap _{i = 1}^n[\phi_{RR_i}(\widehat\pi_1(\Gamma/R_i,R_i[a]))] \subseteq \overline {H}$. Thus for the compatible cofinite entourage $R^{\prime} = \bigcap_{i =1}^nR_i\subseteq R$ over $\Gamma$, we have
 $\phi_{RR^{\prime}}(\pi_1(\Gamma/R^{\prime}, R^{\prime}[a]))
\subseteq H$.

For the converse part choose any $\gamma \in \widehat\pi_1(\Gamma, a)$, then, for any compatible cofinite entourage $R$ over $\Gamma$, consider 
$$
\phi_{R}(\gamma) \in\phi_{R}(\widehat\pi_1(\Gamma, a)) = \bigcap_{R^{\prime}\geq R}[\phi_{RR^{\prime}}(\widehat\pi_1(\Gamma/R^{\prime}, R^{\prime}[a]))]
$$
Now let $\overline{N}$ be a normal subgroup of finite indexed in $\widehat \pi_1(\Gamma/R, R[a])$.  Then, $\overline{N} \cap  \pi_1(\Gamma/R, R[a]) = N$, say, is a normal subgroup of finite index in $\pi_1(\Gamma/R, R[a])$. So, by the given condition, there exists a compatible cofinite entourage $R^{\prime}$ over $\Gamma$, with $R^{\prime} \subseteq R$ such that $\phi_{RR^{\prime}}(\pi_1(\Gamma/R^{\prime}, R^{\prime}[a]))\subseteq N$. Thus $\phi_{RR^{\prime}}(\widehat\pi_1(\Gamma/R^{\prime}, R^{\prime}[a]))\subseteq\overline{N}$. Hence by our earlier work we have, 
$$
\phi_{R}(\gamma) \in\bigcap_{R^{\prime}\geq R}[\phi_{RR^{\prime}}(\widehat\pi_1(\Gamma/R^{\prime}, R^{\prime}[a]))]\subseteq\phi_{RR^{\prime}}(\widehat\pi_1(\Gamma/R^{\prime}, R^{\prime}[a]))\subseteq\overline{N}
$$
Thus, $\phi_{R}(\gamma) \in \bigcap \overline{N}=1$. hence the result follows. 
\end{proof}

\begin{theorem}\label{univ exists lift}
Let $g\from\Tilde\Delta\to\Delta$ be a covering of connected profinite graphs and let $c$ be a vertex of $\Tilde\Delta$.
Let $\widehat\pi_1(\Tilde\Delta,c)=1, g(c) = b$. If $f\from\Gamma\to\Delta$ is a covering map of profinite graphs with $f(a) =b$ for some vertex $a \in V(\Gamma)$, then
 there exists a unique lift $h\from\Tilde\Delta\to\Gamma$ such that $fh = g, h(c) = a$. 
\end{theorem}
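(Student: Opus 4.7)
The plan is to verify the hypotheses of the General Lifting Criterion (Theorem~\ref{Lifting Criterion}) with $\tilde\Delta$ playing the role of the source graph $\Sigma$ and $g$ the map to be lifted. Fix an inverse system $(f_i\from\Gamma_i\to\Delta_i,\phi_{ij},\psi_{ij})$ of locally bijective maps of finite discrete graphs with $f=\varprojlim f_i$, and let $R_i$, $S_i$ denote the associated fundamental systems of compatible cofinite entourages on $\Gamma$, $\Delta$. Since $\tilde\Delta$ is connected, uniqueness of any continuous lift is immediate from Lemma~\ref{l:Uniqueness of lifts}, so the entire effort reduces to showing that for each $i\in I$ there is a compatible cofinite entourage $T_i$ of $\tilde\Delta$ with $(g\times g)[T_i]\subseteq S_i$ and $g_{S_iT_i}\pi_1(\tilde\Delta/T_i,T_i[c])\subseteq f_i\pi_1(\Gamma_i,a_i)$.

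To construct $T_i$, I would proceed in two stages. First, by the continuity of $g$ as a map of uniform spaces, $(g\times g)^{-1}(S_i)$ is an entourage of $\tilde\Delta$ and therefore contains some compatible cofinite entourage $T_i'$; this produces a well-defined map of finite graphs $g_{S_iT_i'}\from\tilde\Delta/T_i'\to\Delta_i$. Set $H_i=f_i\pi_1(\Gamma_i,a_i)\le\pi_1(\Delta_i,b_i)$. Because $f_i$ is a covering of finite connected graphs, $H_i$ has finite index in $\pi_1(\Delta_i,b_i)$, so $K_i=g_{S_iT_i'}^{-1}(H_i)$ is a finite-index subgroup of $\pi_1(\tilde\Delta/T_i',T_i'[c])$.

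The second stage is where the hypothesis $\widehat\pi_1(\tilde\Delta,c)=1$ enters. Applying Lemma~\ref{univ fund} with $R=T_i'$ and $H=K_i$ yields a compatible cofinite entourage $T_i\subseteq T_i'$ such that $\phi_{T_i'T_i}(\pi_1(\tilde\Delta/T_i,T_i[c]))\subseteq K_i$. Composing with $g_{S_iT_i'}$ and using the factorization $g_{S_iT_i}=g_{S_iT_i'}\circ\phi_{T_i'T_i}$ gives $g_{S_iT_i}\pi_1(\tilde\Delta/T_i,T_i[c])\subseteq H_i$, while $T_i\subseteq T_i'$ automatically preserves $(g\times g)[T_i]\subseteq S_i$. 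Both hypotheses of Theorem~\ref{Lifting Criterion} then hold for $g$, producing the desired $h$ with $fh=g$ and $h(c)=a$, and Lemma~\ref{l:Uniqueness of lifts} makes it unique.

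The main obstacle is this second stage: turning the global triviality of the profinite fundamental group of $\tilde\Delta$ into an entourage-level statement sharp enough to force the induced image into the prescribed finite-index subgroup $H_i$. This is precisely the content of Lemma~\ref{univ fund}, and the fact that $H_i$ has finite index (because $f_i$ is a finite covering) is the crucial hook that makes that lemma applicable. Everything else is routine bookkeeping with quotient maps of cofinite graphs and the functoriality of $\pi_1$.
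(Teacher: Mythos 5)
Your proposal is correct and follows exactly the route the paper intends: the paper's proof is the single line ``follows by the previous lemma [Lemma~\ref{univ fund}] and Theorem~\ref{Lifting Criterion},'' and your argument simply fills in the details of that reduction --- choosing $T_i'$ inside $(g\times g)^{-1}(S_i)$, noting $f_i\pi_1(\Gamma_i,a_i)$ has finite index because $f_i$ is a finite covering, and invoking Lemma~\ref{univ fund} to shrink to $T_i$. No discrepancy with the paper's approach.
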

\begin{proof}
Proof of the result follows by the previous lemma and Theorem~\ref{Lifting Criterion}.
\end{proof}
\begin{lemma}\label{simply connect}
If $\Gamma$ is a connected profinite graph and $\widehat\pi_1(\Gamma, a_0) = 1$ for some vertex $a_0$, then $\widehat\pi_1(\Gamma, a) = 1$ for every vertex $a$.
\end{lemma}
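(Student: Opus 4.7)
The plan is to verify the criterion of Lemma~\ref{univ fund} at the vertex $a$, starting from its validity at $a_0$, by transferring conditions on finite index subgroups through change-of-basepoint isomorphisms in each finite quotient $\Gamma/R$. A first reduction I would make is to \emph{normal} subgroups of finite index: every finite index $H \leq \pi_1(\Gamma/R, R[a])$ contains its normal core $H^{*}$, which still has finite index (it is the kernel of the permutation action on the cosets of $H$), and any $R'$ that works for $H^{*}$ works for $H$. This reduction is precisely what will close the argument at the end.

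Fix then a compatible cofinite entourage $R$ and a normal finite index subgroup $H \leq \pi_1(\Gamma/R, R[a])$. Since $\Gamma$ is connected, every finite quotient $\Gamma/R$ is path connected (remark~(4) of Section~\ref{s:Definition}), so I can choose a path $\tau$ in $\Gamma/R$ from $R[a]$ to $R[a_0]$. Setting $H_0 = \tau^{-1} H \tau$, a normal finite index subgroup of $\pi_1(\Gamma/R, R[a_0])$, and using $\widehat\pi_1(\Gamma, a_0) = 1$, Lemma~\ref{univ fund} applied to $(R, H_0)$ produces some $R' \subseteq R$ with $\phi_{RR'}(\pi_1(\Gamma/R', R'[a_0])) \subseteq H_0$.

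Finally I would transport this containment to a statement about $\pi_1(\Gamma/R', R'[a])$. Pick any path $\tilde\tau$ in the path connected graph $\Gamma/R'$ from $R'[a]$ to $R'[a_0]$; conjugation by $\tilde\tau$ converts loops at $R'[a_0]$ into loops at $R'[a]$, so
$$
\phi_{RR'}(\pi_1(\Gamma/R', R'[a])) = \tau' \, \phi_{RR'}(\pi_1(\Gamma/R', R'[a_0])) \, (\tau')^{-1} \subseteq \tau' H_0 (\tau')^{-1},
$$
where $\tau' = \phi_{RR'}(\tilde\tau)$. Rewriting the right-hand side as $(\tau' \tau^{-1}) H (\tau' \tau^{-1})^{-1}$ and noting that $\tau' \tau^{-1}$ is a loop at $R[a]$, the normality of $H$ collapses this to $H$ itself, verifying the criterion at $a$. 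Lemma~\ref{univ fund} then gives $\widehat\pi_1(\Gamma, a) = 1$.

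The main obstacle I anticipate is exactly this last step: the path $\tau$ chosen downstairs in $\Gamma/R$ is in general not the image under $\phi_{RR'}$ of any convenient path $\tilde\tau$ we might pick upstairs in $\Gamma/R'$, because $\phi_{RR'}$ is only a quotient of graphs and enjoys no path-lifting property. The reduction to normal subgroups is the device that absorbs this mismatch, turning what would be a troublesome conjugation by the discrepancy $\tau' \tau^{-1}$ into the identity.
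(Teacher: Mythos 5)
Your proof is correct and follows essentially the same route as the paper's: transfer $H$ to a normal finite-index subgroup based at $a_0$ via a change-of-basepoint path in $\Gamma/R$, apply Lemma~\ref{univ fund} there to get $R'$, and then use normality of $H$ to absorb the discrepancy between the path chosen in $\Gamma/R$ and the image of the path chosen in $\Gamma/R'$. The only difference is cosmetic: you make explicit the reduction to normal subgroups via the normal core, which the paper leaves implicit by simply taking $H$ normal from the outset.
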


\begin{proof}
We show that the property of the previous lemma holds for any vertex $a$ given that it holds for the vertex $a_0$. Let $R$ be a compatible cofinite entourage over $\Gamma$ and let $H$ be a normal subgroup of fnite index in $\pi_1(\Gamma/R, R[a])$. Choose a path $\gamma$ in $\Gamma/R$ from $R[a_0]$ to $R[a]$. Let $\gamma_{*}\from\pi_1(\Gamma/R, R[a_0]) \to \pi_1(\Gamma/R, R[a])$ denote the isomorphism given by $\gamma_{*}(\alpha) = \gamma^{-1}\alpha\gamma$. Then $\gamma_{*}^{-1}(H) = \gamma H\gamma^{-1}$ is a normal subgroup of finite index in $\pi_1(\Gamma/R, R[a_0])$. So, by the previous lemma, there exists a compatible cofinite entourage $R^{\prime} \subseteq R$ such that the image of 
$\phi_{RR^{\prime}} \from\pi_1(\Gamma/R^{\prime}, R^{\prime}[a_0]) \to \pi_1(\Gamma/R, R[a])$ lies in $\gamma H\gamma^{-1}$. Choose a path $\beta$ in $\Gamma/R^{\prime}$ from $R^{\prime}[a_0]$ to $R^{\prime}[a]$. We then have the following commutative diagram of continuous group homomorphisms.
$$\begindc{\commdiag}[30]
\obj(0,0)[1]{$\pi_1(\Gamma/R^{\prime},R^{\prime}[a])$}
\obj(4,0)[2]{$\pi_1(\Gamma/R,R[a])$}
\obj(0,4)[3]{$\pi_1(\Gamma/R^{\prime},R^{\prime}[a_0]))$}
\obj(4,4)[4]{$\pi_1(\Gamma/R,R[a_0])$}
\mor{1}{2}{$\phi_{RR^{\prime}}$}
\mor{3}{1}{${\beta_{*}}$}
\mor{4}{2}{$({\phi_{RR^{\prime}}\beta})_{*}$}
\mor{3}{4}{$\phi_{RR^{\prime}}$}
\enddc$$
So, since $\beta_{*}$ is an isomorphism we have that the image of the bottom homomorphism lies in the subgroup 
$({\phi_{RR^{\prime}}\beta})_{*}(\gamma H\gamma^{-1}) = 
({\phi_{RR^{\prime}}\beta})^{-1}\gamma H\gamma^{-1}({\phi_{RR^{\prime}}\beta}) = H$ as $\gamma^{-1}({\phi_{RR^{\prime}}\beta}) \in \pi_1(\Gamma/R,R[a])$ and $H$ is a normal subgroup of $\pi_{1}(\Gamma/R, R[a])$. Therefore, applying the previous lemma, we have $\widehat\pi_{1}(\Gamma,a) =1$.

\end{proof}

\begin{definition} \rm
Let $\Delta$ be a connected profinite graph. A connected profinite covering graph $(\widetilde\Delta,p)$ of $\Delta$ is called a {\it universal profinite covering graph\/} of $\Delta$ provided that the following universal property holds: if $(\Gamma,f)$ is a connected profinite covering graph of $\Delta$ and $c$, $a$ are vertices of $\widetilde\Delta$, $\Gamma$ with $p(c)=f(a)$, then there exists a homomorphism of coverings~$h$ from $(\widetilde\Delta,p)$ to $(\Gamma,f)$ such that $h(c)=a$.  
$$\begindc{\commdiag}[25]
\obj(0,4)[U]{$\widetilde\Delta,c$}
\obj(0,0)[D]{$\Delta$}
\obj(2,2)[G]{$\Gamma,a$}
\mor{U}{D}{$p$}
\mor{G}{D}{$f$}
\mor{U}{G}{$h$}[\atleft,\dashArrow]
\enddc$$
\end{definition}

\begin{remarks}
We have the following observations: 
\begin{enumerate}
\item The homomorphism $h$ from $(\widetilde\Delta,p)$ to $(\Gamma,f)$ with $h(c)=a$ for any given covering $(\Gamma,f)$ of $\Delta$ and vertices $c$, $a$ of $\widetilde\Delta$, $\Gamma$ respectively, with $p(c)=f(a)$ is unique by Lemma~\ref{l:Uniqueness of lifts}.
\item If $(\widetilde\Delta,p)$ and $(\widetilde\Delta^{\prime},p^{\prime})$ are two universal covering graphs of a connected profinite graph $\Delta$, then $(\widetilde\Delta,p)$ and $(\widetilde\Delta^{\prime},p^{\prime})$ are  isomorphic covering graphs. Hence, we can refer to {\it the\/} universal covering graph of~$\Delta$, once we know that it exists.
\item Applying Theorem ~\ref{univ exists lift} and Lemma ~\ref{simply connect} we can observe that a covering of connected profinite graphs $p\from \widetilde\Delta \to \Delta$ is a universal covering if $\widehat{\pi_1}(\widetilde\Delta, a) =1$ for any vertex $a$ in
$\widetilde\Delta$.
\end{enumerate}

\end{remarks}


\section{Existence of profinite coverings}

The goal of this section is to give a construction of the universal profinite covering graph for any connected profinite graph $\Delta$. In particular, we show that a universal covering of profinite graphs is a regular covering. Hence, the results in the sections on Regular Coverings and of Good pairs and residually free group actions can be applied and this leads to a characterization of all connected profinite covering graphs of a connected profinite graph $\Delta$ in terms of closed subgroups of the group $\Aut(\widetilde\Delta,p)$ of covering transformations of the universal profinite covering graph of $\Delta$. 

\begin{theorem}\label{exi univ cov}
Let $\Delta$ be a connected profinite graph. Then its universal profinite covering graph 
$(\widetilde\Delta,p)$ exists and it is regular. Furthermore, if $(\Sigma,g)$ is any profinite covering graph of $\Delta$ and $h$ is a homomorphism of coverings from $(\widetilde\Delta,p)$ to $(\Sigma,g)$, then the pair $(\widetilde\Delta,h)$ is a regular profinite covering graph of $\Sigma$.
\end{theorem}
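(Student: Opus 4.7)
The plan is to construct $(\widetilde\Delta,p)$ explicitly as an inverse limit of regular covers of the finite discrete quotients of $\Delta$. Fix a base vertex $b_0\in V(\Delta)$ and let $J$ denote the set of pairs $(S,M)$ with $S$ a compatible cofinite entourage of $\Delta$ (so $\Delta/S$ is finite and path connected by Remark~4 of Section~\ref{s:Definition}) and $M\triangleleft\pi_1(\Delta/S,S[b_0])$ of finite index. Declare $(S',M')\geq(S,M)$ iff $S'\subseteq S$ and $\psi_{SS'}(M')\subseteq M$ inside $\pi_1(\Delta/S,S[b_0])$; this renders $J$ directed, since any two members are dominated by the pair formed from the intersection of entourages and the intersection of the pulled-back normal subgroups. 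Let $p_{(S,M)}\from\widetilde\Delta_{(S,M)}\to\Delta/S$ be the regular cover of finite path connected graphs classified by $M$. The lifting criterion for finite graphs supplies coherent bonding maps $\widetilde\Delta_{(S',M')}\to\widetilde\Delta_{(S,M)}$ whenever $(S',M')\geq(S,M)$, and I set $\widetilde\Delta=\varprojlim\widetilde\Delta_{(S,M)}$ with canonical projections $\phi_{(S,M)}$, and $p=\varprojlim p_{(S,M)}$. By construction $p$ is a regular covering of profinite graphs, and $\widetilde\Delta$ is connected as an inverse limit of nonempty finite path connected graphs with surjective bonding maps.

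Next I will verify the universal property by showing $\widehat\pi_1(\widetilde\Delta,\widetilde b_0)=1$ for a vertex $\widetilde b_0\in V(\widetilde\Delta)$ lifting $b_0$. The entourages $R_{(S,M)}=\phi_{(S,M)}^{-1}\phi_{(S,M)}$ form a fundamental system of compatible cofinite entourages of $\widetilde\Delta$, so by Lemma~\ref{univ fund} it suffices to treat $R=R_{(S,M)}$. Then $\widetilde\Delta/R=\widetilde\Delta_{(S,M)}$, whose fundamental group is identified with $M$ via $p_{(S,M)}$. Given a finite index $H\leq\pi_1(\widetilde\Delta_{(S,M)},R[\widetilde b_0])\cong M$, let $M'$ be the normal core of $H$ inside $\pi_1(\Delta/S,S[b_0])$; then $M'$ is normal of finite index, $M'\subseteq H\subseteq M$, and $(S,M')\geq(S,M)$ in $J$. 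Writing $R'=R_{(S,M')}$, the bonding cover $\widetilde\Delta_{(S,M')}\to\widetilde\Delta_{(S,M)}$ induces on $\pi_1$ the inclusion $M'\hookrightarrow M$, so $\phi_{RR'}\bigl(\pi_1(\widetilde\Delta/R',R'[\widetilde b_0])\bigr)=M'\subseteq H$. Lemma~\ref{simply connect} then gives $\widehat\pi_1(\widetilde\Delta,\widetilde a)=1$ at every vertex, and Remark~(3) after the definition of universal covering, together with Theorem~\ref{univ exists lift}, yields the universal property. Since $p$ is regular by construction, this establishes the first two claims.

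For the \emph{furthermore} part, let $h\from\widetilde\Delta\to\Sigma$ be a homomorphism of coverings over $\Delta$, so $gh=p$. Connectedness of $\widetilde\Delta$ puts $h(\widetilde\Delta)$ inside a single connected component $\Sigma_0\subseteq\Sigma$, and $g_0=g|_{\Sigma_0}\from\Sigma_0\to\Delta$ is a covering. I will exhibit a fundamental system of regular good pairs for $h\from\widetilde\Delta\to\Sigma_0$ and invoke Theorem~\ref{nec suf reg cov}. Let $T$ be any compatible cofinite entourage of $\Sigma_0$ with $\Sigma_0/T$ finite and path connected. Choose $S$ with $(g_0\times g_0)[T]\subseteq S$ and $g_{0,ST}\from\Sigma_0/T\to\Delta/S$ locally bijective (possible since $g_0$ is a covering). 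Then $g_{0,ST}$ corresponds to a finite index subgroup $K\leq\pi_1(\Delta/S,S[b_0])$; let $M$ be its normal core. Since $M\subseteq K$, the lifting criterion for finite graphs factors $p_{(S,M)}\from\widetilde\Delta_{(S,M)}\to\Delta/S$ as a regular cover $\widetilde\Delta_{(S,M)}\to\Sigma_0/T$ of finite path connected graphs followed by $g_{0,ST}$. Setting $R=R_{(S,M)}$, the induced map $h_{TR}\from\widetilde\Delta/R\to\Sigma_0/T$ is precisely this regular cover. As $T$ varies, the pairs $(R,T)$ form a fundamental system of regular good pairs for $h$, so Theorem~\ref{nec suf reg cov} (with Lemma~\ref{good pair}) delivers that $h$ is a regular covering of profinite graphs, so $(\widetilde\Delta,h)$ is a regular profinite covering graph of $\Sigma$.

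The main obstacle I foresee lies in the second paragraph: the identification $\pi_1(\widetilde\Delta_{(S,M)})\cong M$ and the description of $\phi_{RR'}$ on $\pi_1$ as the inclusion $M'\hookrightarrow M$ require careful bookkeeping of base vertices across the covering-space correspondence. A closely related subtlety in the third paragraph is matching the base vertex of $\Sigma_0/T$ with $b_0$ and $\widetilde b_0$ so that $K$, its normal core $M$, and the factoring through $\Sigma_0/T$ are all consistent on the fixed component. Both amount to routine invocations of classical lifting theory for finite connected graphs, so no essentially new input beyond the machinery already developed should be needed.
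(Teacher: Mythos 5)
Your construction of $(\widetilde\Delta,p)$ and the verification that $\widehat\pi_1(\widetilde\Delta,\widetilde b_0)=1$ follow essentially the same route as the paper: the same index set $J$ of pairs $(S,M)$, the same regular finite covers classified by the normal subgroups $M$, and the same appeal to Lemma~\ref{univ fund}. Your verification is in fact cleaner than the paper's, since by passing to the normal core of $H$ inside $\pi_1(\Delta/S,S[b_0])$ you keep the entourage $S$ fixed and only shrink the normal subgroup, whereas the paper shrinks both indices; the basepoint bookkeeping you flag is indeed routine. That portion of the argument is sound.

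The gap is in your third paragraph, at the step ``Choose $S$ with $(g_0\times g_0)[T]\subseteq S$ and $g_{0,ST}\from\Sigma_0/T\to\Delta/S$ locally bijective (possible since $g_0$ is a covering).'' This asserts that every compatible cofinite entourage $T$ of $\Sigma_0$ participates in a good pair $(T,S)$ for $g_0$, hence in particular that $g_0$ admits a fundamental system of good pairs. But $g_0$ is an arbitrary, not necessarily regular, covering, and the paper explicitly states just before Theorem~\ref{nec suf reg cov} that it is not known whether non-regular coverings admit fundamental systems of good pairs. The obstruction is concrete: the defining inverse system of $g$ only realizes $\Sigma_0/T_i$ as a subgraph of the finite covering graph $\Sigma_i$, and the restriction of the locally bijective map $g_i$ to such a subgraph is locally injective but need not be locally surjective, so no $S$ making $g_{0,ST}$ locally bijective is guaranteed. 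Without that local bijectivity you cannot invoke the finite lifting criterion to factor $p_{(S,M)}$ through $\Sigma_0/T$, so the pairs $(R,T)$ you produce are not known to be regular good pairs for $h$ and Theorem~\ref{nec suf reg cov} does not apply. The paper avoids this entirely by identifying $\Sigma$ with a quotient $\widetilde\Delta/H$ for a closed subgroup $H\leq\Aut(\widetilde\Delta,p)$ and quoting the lemma that the orbit map of a continuous residually free action is a regular covering; if you wish to keep a good-pair strategy, you would first need to show that $\Aut(\widetilde\Delta,h)$ acts residually freely on $\widetilde\Delta$ with quotient $\Sigma_0$ and then apply Theorem~\ref{quo cov} to $h$ directly, rather than deriving good pairs for $h$ from unestablished good pairs for $g_0$.
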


\begin{proof}
 Without loss of generality we can assume that $\Delta =\varprojlim\Delta_{i}$ where $i$ runs through $I$ a directed set corresponding to a fundamental system of compatible cofinite entourages over $\Delta$ and each  $\Delta_{i} = \psi_{i}(\Delta) = \Delta/S$ with $\psi_{i}^{-1}\psi_i = S_i = S$ and $\psi_{i}$ being the canonical projection from $\Delta$ to $\Delta_{i}$. So, consider a typical $\Delta/S = \Delta_{i}$, and $S[b]  \in V(\Delta/S)$ where $b \in V(\Delta)$. Let $J = \{(S,N)\mid N$ a normal subgroup of finite index in $\pi_1(\Delta/S ,S[b]) , S = S_{i}$ for some $i \in I\}$. Then by path connected coverings there exists a finite connected graph $\Gamma_{SN}$, a vertex $a_{SN} \in V(\Gamma_{SN})$, and a regular covering of finite graphs $f_{SN}$ from $(\Gamma_{SN}, a_{SN})$ to $(\Delta/S ,S[b])$, so that $f_{SN}(\pi_1(\Gamma_{SN}, a_{SN})) = N$.
Let us now define an order over $J$ by $(S_j, N_j) \geq (S_{i}, N_i)$ if and only if $j \geq i$ in $I$ and $\psi_{ij}(N_{j})\subseteq N_{i}$, where $\psi_{ij}\from\pi_1(\Delta_j, b_j)\to\pi_1(\Delta_i,b_i)$ is the group homomorphism induced by the map of graphs $\psi_{ij}\from(\Delta_j, b_j)\to(\Delta_i,b_i)$, where $(\Delta_i, \psi_{ij})_{i,j \in I, j\geq i}$ is the corresponding inverse system and the vertex $b$ can also be viewed as $(b_i)_{i \in I}$. 

It turns out that that $J$ is a directed set with respect to the order $'\geq'$.

By virtue of the  lifting criterion of finite graphs and uniqueness of lifting we have an inverse system of finite discrete graphs $\{(\Gamma_{SN}, a_{SN})\}$ along with a family of continuous maps of graphs 
$$
\{\phi_{SNPM} \mid (S, N), (P, M)\in J,(P, M)\geq (S, N)\}
$$
So let us define $\widetilde\Delta = \varprojlim_{(S,N)\in J}(\Gamma_{SN}, a_{SN})$ and $p = \varprojlim_{{(S,N)}\in J}f_{SN}$. From the construction, $p\from(\widetilde\Delta,a)\to (\Delta,b)$ is a regular cover of profinite graphs. 

Now let $v\in V(\widetilde\Delta)$ where $v = (v_{SN})_{(S, N)\in J}$, we claim that $\widehat{\pi_1}(\widetilde\Delta, v) = 1$.

Proof of the claim: Let $\phi_{SN}\from (\widetilde\Delta, v)\to(\Gamma_{SN}, v_{SN})$ be the canonical projection map. Let $R_{SN} = \phi_{SN}^{-1}\phi_{SN}$. Then we can identify $(\widetilde\Delta/R_{SN},R_{SN}[v])$ with $\phi_{SN}((\widetilde\Delta, v))$, as a subgraph of $(\Gamma_{SN}, v_{SN})$ and $R_{SN}[v] = v_{SN}$. Hence we can view $K = \{R_{SN}\mid(S,N)\in J\}$ as a fundamental system of compatible cofinite entourages over $\widetilde\Delta$.  

Now for any $R_{SN}$ in $K, \widetilde\Delta/R_{SN}$ is a finite discrete graph and

$\phi_{SN}\from (\widetilde\Delta,v)\to (\widetilde\Delta/R_{SN},R_{SN}[v])$ is a continuous map of graphs. Thus for any $(P,M)\in J$ with $(P,M)\geq (S,N)$ and for the continuous map of graphs 

$\phi_{SNPM}\from (\Gamma_{PM},v_{PM})\ to (\widetilde\Delta/R_{SN},R_{SN}[v])$ the following diagram commutes. 
$$\begindc{\commdiag}[30]
\obj(0,0)[1]{$(\Gamma_{PM},v_{PM})$}
\obj(-4,4)[2]{$(\widetilde\Delta,v)$}
\obj(4,4)[3]{$(\widetilde\Delta/R_{SN},R_{SN}[v])$}
\mor{2}{1}{$\phi_{PM}$}
\mor{1}{3}{$\phi_{SNPM}$}
\mor{2}{3}{$\phi_{SN}$}
\enddc$$

Let $R_{PM} = \phi_{PM}^{-1}\phi_{PM}$. Hence, $R_{PM}\subseteq R_{SN}$ by the above commutative diagram. Now let $N_1$ be a normal subgroup of finite index in $\pi_1((\widetilde\Delta/R_{SN},R_{SN}[v]))$. Then $\phi_{SNPM}^{-1}(N_1)$ is a normal subgroup of finite index in $\pi_1(\Gamma_{PM},v_{PM})$. Hence $f_{PM}(\phi_{SNPM}^{-1}(N_1))$ is a subgroup of finite index in $\pi_1(\Delta/P, P[p(v)])$ as $f_{PM}(\pi_1(\Gamma_{PM},v_{PM}))$ is a subgroup of finite index in $\pi_1(\Delta/P,P[p(v)]) \cong\pi_1(\Delta/P,P[b])$ as $\Delta/P$ is path connected. Let $H$ a normal subgroup of finite index in $\pi_1(\Delta/P,P[p(v)])$ be such that $ H\subseteq f_{PM}(\phi_{SNPM}^{-1}(N_1))\bigcap M$. Hence there exists $(P,H)$ in $J$ such that $f_{PH}\from(\Gamma_{PH},v_{PH})\to(\Delta/P,P[p(v)])$ is a regular covering of finite graphs and $f_{PH}(\pi_1(\Gamma_{PH},v_{PH})) = H$. Then we have the following commutative diagram.
$$\begindc{\commdiag}[30]
\obj(0,0)[1]{$(\Gamma_{PM},v_{PM})$}
\obj(-3,3)[2]{$(\widetilde\Delta,v)$}
\obj(3,3)[3]{$(\widetilde\Delta/R_{SN},R_{SN}[v])$}
\obj(-5,0)[4]{$(\Gamma_{PH},v_{PH})$}
\obj(0,-4)[5]{$(\Delta/P,P[p(v)])$}
\mor{2}{1}{$\phi_{PM}$}
\mor{1}{3}{$\phi_{SNPM}$}
\mor{2}{3}{$\phi_{SN}$}
\mor{2}{4}{$\phi_{PH}$}
\mor{4}{1}{$\phi_{PMPH}$}
\mor{1}{5}{$f_{PM}$}
\mor{4}{5}{$f_{PH}$}
\enddc$$
Thus 
$$
f_{PM}(\phi_{PMPH}(\pi_1((\Gamma_{PH},v_{PH})))) \subseteq H
$$
Hence 
$$
\phi_{PMPH}(\pi_1((\Gamma_{PH},v_{PH}))) \subseteq f_{PM}^{-1}(H)\subseteq \phi_{SNPM}^{-1}(N_1)
$$
Calling $\phi_{PH}^{-1}\phi_{PH} = R_{PH}$, as earlier, we have now $R_{PH} \subseteq R_{PM} \subseteq R_{SN}$ and

$$
\phi_{SNPH}(\pi_1(\widetilde\Delta/R_{PH},R_{PH}[v])) = \phi_{SNPM}(\phi_{PMPH}(\pi_1(\widetilde\Delta/R_{PH},R_{PH}[v])))
$$

$$
\subseteq \phi_{SNPM}(\phi_{PMPH}(\pi_1((\Gamma_{PH},v_{PH})))) \subseteq \phi_{SNPM}((\phi_{SNPM}^{-1}(N_1)) \subseteq N_1
$$
Hence, by the Lemma~\ref{univ fund}, $\widehat{\pi_1}(\widetilde\Delta, v) = 1$. So, by a remark at the end of section 6 we have $\Tilde\Delta$ is the universal covering of $\Delta$.

Let $g\from \Gamma\to \Delta$ be any covering of profinite connected graphs. Then, by section 5, $\Gamma = \Tilde\Delta/H$ for some closed subgroup $H$ of $G = \Aut(\Tilde\Delta,p)$. So, as in section 5, since $H$ acts residually freely over $\Tilde\Delta$ we can view $g\from \Gamma \to \Delta$ as the orbit map $h\from\Tilde\Delta \to \Tilde\Delta/H$ which is a regular covering.
Hence the result follows.  
\end{proof}

Thus in the light of {Theorem}~\ref{univ exists lift}, Lemma $6.8$, {Theorem}~\ref{exi univ cov}, we have the following theorem
\begin{theorem}
Let $\Delta$ be a connected profinite graph. Then a covering map $p\from\widetilde{\Delta}\to\Delta$ is the universal covering of $\Delta$ if and only if $\widehat\pi_1(\widetilde\Delta,v) =1$ for any vertex $v \in V(\widetilde\Delta)$
\end{theorem}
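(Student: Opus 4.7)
The plan is to establish the biconditional by handling each direction separately, drawing on results already developed in the paper rather than introducing new machinery.

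For the backward direction ($\widehat\pi_1(\widetilde\Delta,v) = 1$ for all $v$ implies $p$ is universal), I would verify the defining universal property directly. Given any connected profinite covering graph $(\Gamma,f)$ of $\Delta$ and vertices $c \in V(\widetilde\Delta)$, $a \in V(\Gamma)$ with $p(c) = f(a)$, the hypothesis gives $\widehat\pi_1(\widetilde\Delta,c) = 1$, so Theorem~\ref{univ exists lift} yields a unique continuous map of graphs $h\from\widetilde\Delta \to \Gamma$ with $fh = p$ and $h(c) = a$. This $h$ is the required homomorphism of profinite coverings, so $(\widetilde\Delta,p)$ satisfies the universal property. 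This is essentially the observation already recorded in Remark~3 after the definition of universal covering graph.

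For the forward direction ($p$ universal implies $\widehat\pi_1(\widetilde\Delta,v) = 1$ for all $v$), I would exploit the uniqueness of universal coverings together with the existence/construction result. By Theorem~\ref{exi univ cov}, there exists a universal covering $p'\from\widetilde\Delta' \to \Delta$ whose construction explicitly produces a vertex $v_0 \in V(\widetilde\Delta')$ with $\widehat\pi_1(\widetilde\Delta',v_0) = 1$. By Lemma~\ref{simply connect}, triviality of the profinite fundamental group at a single vertex of a connected profinite graph propagates to every vertex, so $\widehat\pi_1(\widetilde\Delta',v') = 1$ for all $v' \in V(\widetilde\Delta')$. The uniqueness of universal covers (Remark~2 after the definition) produces an isomorphism of profinite covering graphs $\sigma\from(\widetilde\Delta,p) \to (\widetilde\Delta',p')$. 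Since $\sigma$ is in particular an isomorphism of profinite graphs, the induced homomorphism on profinite fundamental groups is an isomorphism (by the lemma in Section~7 that isomorphisms of profinite graphs induce isomorphisms of profinite fundamental groups). Therefore for every $v \in V(\widetilde\Delta)$ we obtain $\widehat\pi_1(\widetilde\Delta,v) \cong \widehat\pi_1(\widetilde\Delta',\sigma(v)) = 1$.

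The main obstacle, if one exists, is purely bookkeeping: ensuring that the uniqueness isomorphism $\sigma$ is genuinely an isomorphism of profinite graphs (and not just some weaker notion of covering equivalence), and that base points are handled compatibly so that the functoriality lemma applies. Since all three inputs (Theorem~\ref{univ exists lift}, Lemma~\ref{simply connect}, and Theorem~\ref{exi univ cov}) have already been proved, and the induced map on $\widehat\pi_1$ is natural with respect to isomorphisms of profinite graphs, the argument reduces to stringing these citations together in the correct order. No new calculation is required.
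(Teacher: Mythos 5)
Your proof is correct and follows essentially the same route as the paper, which derives this theorem directly from Theorem~\ref{univ exists lift}, Lemma~\ref{simply connect}, and Theorem~\ref{exi univ cov}; your forward direction merely makes explicit the uniqueness-of-universal-covers and functoriality steps that the paper leaves implicit. No gaps.
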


\begin{theorem}
For any covering $f\from\Gamma\to\Delta$ of connected profinite graphs, $\Gamma$ can be viewed as $\widetilde{\Delta}/H$ where $p\from \widetilde\Delta \to \Delta$ is the universal covering map and $H$ is a closed subgroup of $G = Aut(\widetilde\Delta,p)$.
\end{theorem}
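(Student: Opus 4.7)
The plan is to use the universal property of $\widetilde\Delta$ to lift $p$ through $f$, and then identify $\Gamma$ with the orbit space $\widetilde\Delta/H$ for $H$ the group of covering transformations of the resulting lift.

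First, by Lemma~\ref{elem prop}(b), $f$ is surjective, so I can choose vertices $a\in\widetilde\Delta$ and $c\in\Gamma$ with $p(a)=f(c)$. The universal property of $(\widetilde\Delta,p)$ then produces a (unique, by Lemma~\ref{l:Uniqueness of lifts}) homomorphism of coverings $h\from(\widetilde\Delta,p)\to(\Gamma,f)$ with $h(a)=c$, i.e.\ a continuous map of graphs satisfying $fh=p$. The ``furthermore'' clause of Theorem~\ref{exi univ cov}, applied with $(\Sigma,g)=(\Gamma,f)$, then asserts that $(\widetilde\Delta,h)$ is itself a regular profinite covering graph of~$\Gamma$.

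Set $H=\Aut(\widetilde\Delta,h)$. The inclusion $H\subseteq G=\Aut(\widetilde\Delta,p)$ is immediate: if $h\alpha=h$ then $p\alpha=fh\alpha=fh=p$, so $\alpha\in G$. For closedness I would write
$$H=\bigcap_{x\in\widetilde\Delta}\{\alpha\in G\mid h(\alpha x)=h(x)\};$$
each set on the right is closed in $G$, since $\Gamma$ is Hausdorff and for each fixed $x$ the evaluation $\alpha\mapsto h(\alpha x)$ is continuous (the action $G\times\widetilde\Delta\to\widetilde\Delta$ is continuous via the uniform equicontinuity of Lemma~\ref{uniform equicont}, composed with the continuous map $h$). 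Hence $H$ is a closed subgroup of the profinite group~$G$.

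Finally, applying Theorem~\ref{res free} to the regular covering $h$, the group $H$ acts simply transitively on each vertex fiber of $h$, and the local bijectivity of $h$ (Lemma~\ref{elem prop}(a)) propagates this to edge fibers; so the $H$-orbits in $\widetilde\Delta$ coincide with the fibers of $h$. By Theorem~\ref{quo cov}, the orbit map $q\from\widetilde\Delta\to\widetilde\Delta/H$ is a regular covering of profinite graphs, and $h$ factors through $q$ as a continuous bijective graph map $\bar h\from\widetilde\Delta/H\to\Gamma$ satisfying $\bar h\,q=h$ and $f\bar h$ is the induced covering to $\Delta$. Since $\widetilde\Delta/H$ is compact and $\Gamma$ is Hausdorff, $\bar h$ is a homeomorphism and hence an isomorphism of profinite graphs over~$\Delta$. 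The most delicate step is verifying that $H$ is closed in $G$; the remainder is a direct assembly of the structural results from Sections~5 and~7.
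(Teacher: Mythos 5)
Your proposal is correct and follows essentially the same route as the paper: lift $p$ through $f$ to obtain a regular covering $h\from\widetilde\Delta\to\Gamma$, set $H=\Aut(\widetilde\Delta,h)$, identify $\Gamma$ with $\widetilde\Delta/H$ via a continuous bijection from a compact space to a Hausdorff space, and check $H$ is a closed subgroup of $G$. The only (minor) divergence is in the closedness step, where you exhibit $H$ directly as an intersection of closed subsets of $G$, whereas the paper argues that $H$ is a compact uniform subspace of the Hausdorff group $G$; both are valid.
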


\begin{proof}
Let $f\from\Gamma \to \Delta$ be a covering map of profinite connected graphs. Then, as in the proof of Theorem~\ref{exi univ cov}, there exists a unique lift 
$h\from\widetilde\Delta \to \Gamma$, where $h$ is a regular covering map of profinite connected graphs and $fh = p$. Then, as in the proof of Theorem~\ref{nec suf reg cov}, $\Gamma$ is isomorphic to $\widetilde\Delta /H$, where $H = \Aut(\widetilde{\Delta},h)$. 

Now it turns that $H$ is a subgroup of $G$. Since $H$ is a subgroup of $G$ all $G$-invariant compatible cofinite entourages $R$ over $\Gamma$ are also $H$-invariant. It follows that $H$ is a uniform subspace of $G$. So, by Theorem~\ref{quo cov}, $H$ is compact and $G$ is Hausdorff. Hence $H$ is closed in $G$. 
\end{proof}

\renewcommand{\bibname}{References}
\bibliographystyle{plain}

\begin{thebibliography} {12}
\normalsize
\setlength{\baselineskip}{0.5\baselineskip}

\vspace{32pt}

\bibitem{ACD13}
 B.~Das, \emph{Cofinite graphs and their profinite
  completions}, PhD Dissertation.
\vspace{16pt}
\bibitem{nB66}
N.~Bourbaki, \emph{General {T}opology}, Elements of {M}athematics,
  Addison-Wesley, Reading Mass., 1966.
\vspace{16pt}
\bibitem{bH77}
B.~Hartley, \emph{Profinite and residually finite groups}, Rocky Mountain J.
  Math \textbf{7} (1977), 193--217.
\vspace{16pt}
\bibitem{jK55}
J.~Kelley, \emph{General {T}opology}, D. Van Nostrand Company, Inc.,
  Toronto-New York-London, 1955.
\vspace{16pt}
\bibitem{wM77}
W.~S. Massey, \emph{{A}lgebraic {T}opology: {A}n {I}ntroduction}, Graduate
  Texts in Mathematics, vol.~56, Springer-Verlag, Berlin, Heidelberg, New York,
  1977.
\vspace{16pt}
\bibitem{jS83}
J.~R. Stallings, \emph{Topology of finite graphs}, Invent. math. \textbf{71}
  (1983), 551--565.
\vspace{16pt}
\bibitem{jW98}
J.~Wilson, \emph{{P}rofinite {G}roups}, Oxford University Press, Oxford, 1998.
\vspace{16pt}
\bibitem{pZ89}
P.~A. Zalesskii, \emph{Geometric characterization of free constructions of profinite groups}, Siberian Math J. \textbf{30} (1989), 227--235.

\end{thebibliography}

\end{document}